\newtheorem{theo}{Theorem}[section]
\newtheorem{prop}[theo]{Proposition}
\theoremstyle{definition}
\newtheorem{defn}[theo]{Definition}
\newtheorem*{defn*}{Definition}
\newtheorem{lm}[theo]{Lemma}
\newtheorem{expl}{Example}[section]
\date{} 
\title[Boundedness of PFIOS]{On the boundedness of periodic Fourier integral operators  in Lebesgue spaces with variable exponent}
\begin{document}

\author[B. Tai]{Boukary Tai}
\address{Boukary Tai\\Department of Mathematics \\ University Norbert ZONGO \\ BP 376 Koudougou \\ Burkina Faso}
\email{taiboukary@gmail.com}

\author[M. Congo]{Mohamed Congo}
\address{Mohamed Congo\\Department of Mathematics  \\ University Joseph Ki-Zerbo \\ 03 BP 7021 Ouagadougou\\ Burkina Faso}
\email{mohamed.congo@yahoo.fr}

\author[M. F. Ouedraogo]{Marie Fran{\c{c}}oise Ouedraogo}
\address{Marie Fran{\c{c}}oise Ouedraogo\\ Department of Mathematics  \\ University Joseph Ki-Zerbo \\ 03 BP 7021 Ouagadougou\\ Burkina Faso}
\email{omfrancoise@yahoo.fr}

\author[A. Ouedraogo]{Arouna Ouedraogo}
\address{Arouna Ouedraogo\\Department of Mathematics  \\ University Norbert ZONGO \\ BP 376 Koudougou \\ Burkina Faso}
\email{arounaoued2002@yahoo.fr}

\subjclass[2020]{47B38; 58J40;42B05}
\keywords{periodic Fourier integral operators, Torus, Variable exponent Lebesgue spaces}
\maketitle
	
\begin{center}
\textbf{Abstract}
\end{center}
The aim of this paper is to investigate the boundedness of periodic Fourier integral operators in Lebesgue spaces with variable exponent $L^{p(\cdot)}$ on the $n$-dimensional torus. We deal with operators of type $(\rho, \delta)$  which symbols belong to the  H\"{o}rmander  class $S^{m}_{\rho, \delta}(\mathbb{T}^{n}\times\mathbb{Z}^{n})$ for $0\leq\delta<\rho\leq1.$\\

\section{Introduction}

A periodic Fourier integral operator (also called Fourier series operator) is defined by providing a symbol and a phase function. Such operator can be expressed as follows
$$A_{\phi, a}f(x)=\sum_{\xi\in\mathbb{Z}^{n}}e^{2\pi i\phi(x, \xi)}a(x, \xi)(\mathcal{F}_{\mathbb{T}^{n}}f)(\xi),\quad \forall f\in C^{\infty}(\mathbb{T}^{n}),$$ 
where $(\mathcal{F}_{\mathbb{T}^{n}}f)(\xi)$ is the Fourier transform on the torus $\mathbb{T}^{n}$, $a(x,\xi)$  denotes the symbol and $\phi(x, \xi)$ is the phase function. These operators were first introduced by M. Rushansky and V. Turunen  \cite{RT}. They naturally emerged in the solutions of hyperbolic Cauchy problems with periodic conditions, as can be seen, for example in [\cite{RT}, pages 410-411].

The boundedness of  Fourier integral operators in a functional space is contingent upon  conditions on the symbol $a(x,\xi )$ and the phase $\phi(x,\xi ).$ Several authors   have established results on the extension of Fourier integral operators in the $L^{p}(\mathbb{R}^{n})$ spaces  depending on the values of the real order of the symbol (see\cite{Ms} and  \cite{LH}).  Furthermore, D. Ferreira  and W. Staubach \cite{10} investigated the regularity of Fourier integral operators within weighted Lebesgue spaces $L^{p}_w(\mathbb{R}^{n})$, where the weight function belongs to the Muckenhoupt space $A_p,$  for $1 < p < \infty$. In \cite{AI}, K. Alexei Yu and S. Ilya M. studied the boundedness of pseudo differential operators associated to a symbol in certain class of  H\"{o}rmander.

 Also, the study of periodic Fourier integral operators gives rise to a fundamental issue pertaining to a topological property of these operators, namely the question of their boundedness in functional spaces. For example, the study of the $L^{p}$-boundedness of periodic Fourier integral  operators with a symbol $a(x, \xi)\in S^{m}_{1, 0}(\mathbb{T}^{n}\times\mathbb{Z}^{n})$ i.e. $$|\partial_{x}^{\beta}\triangle^{\alpha}_{\xi}a(x, \xi)|\leq C_{\alpha, \beta}\langle \xi  \rangle^{m-|\alpha|},$$ with a positively homogeneous phase function of degree $1$ (for $\xi\neq 0$),  belonging to $C^{\infty}(\mathbb{T}^{n}\times\mathbb{R}^{n}\backslash\{0\})$ whose Hessian matrix is non-degenerate in the spaces $L^{p}(\mathbb{T}^{n})$, was studied by D. Cardona, R. Messiouene and A. Senoussaoui \cite{2}. Moreover, D. Cardona in \cite{Dc}  studied the particular case when the phase function  $\phi(x, \xi)=x\cdot\xi$ and established sufficient conditions on the symbol $a(x,\xi)$ to ensure boundedness of  periodic pseudo-differential operators in the spaces $L^{p}(\mathbb{T}^{n})$.
However, it is obvious that Lebesgue spaces with a constant exponent are not sufficient for modelling complex physical phenomena, in particular those exhibiting spatial variation in properties, such as heterogeneous materials or non-Newtonian fluids. This  led to the generalization to the variable exponent Lebesgue spaces $L^{p(\cdot)}(\mathbb{R}^{n}).$ 
Some authors such as  \cite{00}, \cite{P} focused on the extension of differential operators and pseudo-differential operators in  $L^{p(\cdot)}(\mathbb{R}^{n})$ and $L^{p(\cdot)}_w(\mathbb{R}^{n}).$ 

In this paper, we focus on the study of periodic Fourier integral operators in the Lebesgue spaces with variable exponent on the $n$-dimensional torus $L^{p(\cdot)}(\mathbb{T}^{n}).$ 
 We first establish the boundedness of periodic Fourier integral operators in $L^{p_0}_w(\mathbb{T}^{n}),$  when $1<p_0<\infty$ and  deduce the boundedness of these operators in  $L^{p(\cdot)}(\mathbb{T}^{n}),$ using the technique developed by V. Rabinovich and S. Samko \cite{VS}. We also establish boundedness results for periodic Fourier integral operators in  $L^{p(\cdot)}_w(\mathbb{T}^{n}).$ The rest of the paper is organized as follows. The Section 2 is devoted to preliminaries on variable exponent and weight function in the torus. In section 3, we provide basic tools on periodic Fourier integral operators

\section{Preliminaries}

The two first sections present the basic definitions and useful results in the sequel. For more informations, see references \cite{G.c}, \cite{00}, \cite{D}, \cite{RT}.
\subsection{On the torus}

\begin{enumerate}

\item 	The Torus is the quotient space $$\mathbb{T}^{n}=\mathbb{R}^{n}/\mathbb{Z}^{n}=(\mathbb{R}/\mathbb{Z})^{n},$$ obtained by the equivalence relation $x\sim y\Longleftrightarrow x-y\in\mathbb{Z}^n$, where $\mathbb{Z}^{n}$ denotes the additive group of integral coordinate. 

\item We can identify $\mathbb{T}^{n}$ with the cube $[0, 1)^{n}\subset \mathbb{R}^{n}$, where the measure on the torus coincides with the restriction of the Euclidean measure on the cube.
	
\item 	A function $f:\mathbb{R}^{n}\rightarrow\mathds{C}$ is 1-periodic if  $f(x+l)=f(x)$\quad $\forall x\in\mathbb{R}^{n}$ and $l\in\mathbb{Z}^{n}$.
This definition shows that there is a correspondence between functions defined on $\mathbb{R}^{n}$ and those defined on $\mathbb{T}^{n}$.
\end{enumerate}
To define Fourier integral operators, as well as operator series, we need the notion of Fourier transform.
\begin{defn} The Fourier transform is defined by
$$\mathcal{F}_{\mathbb{T}^{n}}: C^{\infty}(\mathbb{T}^{n})\rightarrow\mathcal{S}(\mathbb{Z}^{n}), \quad
	f\longmapsto\hat{f},$$
where $(\mathcal{F}_{\mathbb{T}^{n}}f)(\xi)=\hat{f}(\xi)={\displaystyle\int_{\mathbb{T}^{n}}}e^{-2\pi i x\cdot\xi}f(x)dx$.\\
\end{defn}
Note that $\mathcal{F}_{\mathbb{T}^{n}}$ is a bijection and its inverse $\mathcal{F}^{-1}_{\mathbb{T}^{n}}: \mathcal{S}(\mathbb{T}^{n})\rightarrow C^{\infty}(\mathbb{Z}^{n})$ is defined by:
$$f(x)=\sum_{\xi \in \mathbb{Z}^n}e^{2\pi i x\cdot\xi}\hat{f}(\xi).$$

\subsection{On periodic Fourier   integral operators}

 Given that the notion of partial derivative is no longer valid when $\xi \in \mathbb{Z}^{n}$, we use the concept of forward and backward difference operators, also known as discrete derivatives.

\begin{defn}
Let $(\delta_j)_{1\leq j\leq n}$  be the canonical basis of $\mathbb{R}^{n}$. For a function $a:\mathbb{Z}^{n}\rightarrow\mathbb{C}$ the forward and backward partial difference operators of $a$ are defined respectively by 
\begin{equation}
	\triangle_{\xi_j}a(\xi)=a(\xi+\delta_j)-a(\xi),\quad \label{eqn6}
\bar{\triangle}_{\xi_j}a(\xi)=a(\xi)-a(\xi+\delta_j).
\end{equation}
For $\alpha=(\alpha_1, \cdots, \alpha_n)\in\mathbb{N}^{n}_{0},$ 
\begin{align}\label{x}
\triangle_{\xi}^{\alpha}=\triangle_{\xi_1}^{\alpha_1}\cdots\triangle_{\xi_n}^{\alpha_n};\qquad
\bar{\triangle}_{\xi}^{\alpha}=\bar{\triangle}_{\xi_1}^{\alpha_1}\cdots \bar{\triangle}_{\xi_n}^{\alpha_n}.
\end{align}
\end{defn}
\begin{lm}[\cite{RT} Lemma 3.3.10]\label{lm}
Assume that $\varphi, \psi:\mathbb{Z}^{n}\rightarrow \mathds{C}$. Then for all $\alpha\in\mathbb{N}^{n}$,
\begin{align}
\sum_{\xi \in \mathbb{Z}^n}\varphi(\xi)\Delta_{\xi}^{\alpha}\psi(\xi)=(-1)^{|\alpha|}\sum_{\xi\in\mathbb{Z}^{n}}\left( \bar{\Delta}^{\alpha}_{\xi}\varphi(\xi)\right) \psi(\xi)
\end{align}
provided that both series are absolutely convergent.
\end{lm}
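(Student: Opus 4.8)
The plan is to reduce the general multi-index identity to the case $|\alpha|=1$ by a discrete summation-by-parts (Abel) argument, and then to propagate it by induction on $|\alpha|$. The only analytic input needed is the absolute convergence hypothesis, which serves to license the rearrangement of infinite series over $\mathbb{Z}^n$.

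First I would establish the base case. Fix a coordinate direction $j$ and take $\alpha=\delta_j$, so that $\Delta_{\xi_j}\psi(\xi)=\psi(\xi+\delta_j)-\psi(\xi)$. Splitting $\sum_{\xi}\varphi(\xi)\Delta_{\xi_j}\psi(\xi)$ into $\sum_{\xi}\varphi(\xi)\psi(\xi+\delta_j)-\sum_{\xi}\varphi(\xi)\psi(\xi)$ and performing the index translation $\xi\mapsto\xi-\delta_j$ in the first piece, I obtain $\sum_{\xi}\bigl[\varphi(\xi-\delta_j)-\varphi(\xi)\bigr]\psi(\xi)$; recognizing the backward difference operator in the bracket gives $-\sum_{\xi}\bigl(\bar\Delta_{\xi_j}\varphi(\xi)\bigr)\psi(\xi)$, which is the asserted identity for $|\alpha|=1$. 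The absolute convergence hypothesis is precisely what legitimizes both the separation of the series into two pieces and the shift of the summation index, since an absolutely convergent series over $\mathbb{Z}^n$ may be reindexed freely.

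Next I would run the induction on $|\alpha|$. Writing $\alpha=\beta+\delta_j$ with $|\beta|=|\alpha|-1$ and using that the operators $\Delta_{\xi_1},\dots,\Delta_{\xi_n}$ act by translation and hence commute, I factor $\Delta_{\xi}^{\alpha}=\Delta_{\xi_j}\bigl(\Delta_{\xi}^{\beta}\bigr)$. Applying the base-case identity in direction $j$ (with $g=\Delta_{\xi}^{\beta}\psi$) moves one forward difference off $\psi$ and onto $\varphi$, producing the factor $-1$ and turning $\Delta_{\xi_j}$ into $\bar\Delta_{\xi_j}$; applying the inductive hypothesis to the remaining multi-index $\beta$, with $\varphi$ replaced by $\bar\Delta_{\xi_j}\varphi$, then transfers $\Delta_{\xi}^{\beta}$ and yields the factor $(-1)^{|\beta|}$ together with $\bar\Delta_{\xi}^{\beta}$. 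Since $\bar\Delta_{\xi_j}$ commutes with $\bar\Delta_{\xi}^{\beta}$, the two steps combine to give the total sign $(-1)^{|\beta|+1}=(-1)^{|\alpha|}$ and the operator $\bar\Delta_{\xi}^{\alpha}=\bar\Delta_{\xi}^{\beta}\bar\Delta_{\xi_j}$ acting on $\varphi$, which is the claimed formula. Equivalently, one may simply iterate the one-step identity $|\alpha|$ times, once per unit decrement of $\alpha$, accumulating the sign $(-1)^{|\alpha|}$.

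The main obstacle is not algebraic but one of justifying the manipulation of infinite series: at each intermediate stage the partially transformed series must still converge absolutely so that the one-dimensional step can be reapplied. I would handle this by observing that the stated hypothesis on the two extreme series, combined with the boundedness of the shift operators, guarantees absolute convergence of every intermediate sum obtained by transferring finitely many differences, so that the base-case reindexing is valid at each iteration. The bookkeeping of signs and the commutation of the coordinatewise difference operators are then entirely routine.
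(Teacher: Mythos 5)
The paper offers no proof of this lemma at all --- it is quoted, with its hypothesis, from Ruzhansky--Turunen (Lemma 3.3.10) --- so there is no argument of the paper's to compare yours against; your proposal must stand on its own. Its algebraic core (a one-dimensional Abel summation-by-parts step, then induction on $|\alpha|$ using that the coordinate difference operators commute) is the standard and correct skeleton. The genuine problem is the analytic claim you lean on twice: that absolute convergence of the two displayed series licenses splitting $\sum_{\xi}\varphi(\xi)\bigl(\psi(\xi+\delta_j)-\psi(\xi)\bigr)$ into $\sum_{\xi}\varphi(\xi)\psi(\xi+\delta_j)-\sum_{\xi}\varphi(\xi)\psi(\xi)$, and, in your closing paragraph, that it ``guarantees absolute convergence of every intermediate sum.'' That is false: absolute convergence of a series of differences controls neither piece separately, and no argument can repair this under the stated hypotheses, because the lemma as literally stated is false. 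Take $n=1$, $\alpha=1$, $\varphi\equiv 1$, and $\psi(\xi)=1$ for $\xi\ge 0$, $\psi(\xi)=0$ for $\xi<0$. Then $\Delta_{\xi}\psi(\xi)$ equals $1$ at $\xi=-1$ and $0$ elsewhere, so the left-hand series is absolutely convergent with sum $1$, while $\bar\Delta_{\xi}\varphi\equiv 0$, so the right-hand series is absolutely convergent with sum $0$; the identity would assert $1=0$. (In this example the two pieces of your splitting both diverge; equivalently, the discrete boundary term $\varphi(N)\psi(N+1)-\varphi(-N-1)\psi(-N)$ tends to $1$, not $0$.) A correct proof --- and the way the lemma is actually used in this paper, where the functions have enough decay --- requires the stronger, implicit hypothesis that the shifted series $\sum_{\xi}\bigl|\varphi(\xi)\,\psi(\xi+\gamma)\bigr|$ converge for all multi-indices $0\le\gamma\le\alpha$; under that hypothesis your splitting, reindexing, and induction go through verbatim. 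As written, the convergence bookkeeping is the step that fails, and it is not a detail one can wave at via ``boundedness of the shift operators.''

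A secondary point: your base case identifies $\varphi(\xi-\delta_j)-\varphi(\xi)$ with $-\bar\Delta_{\xi_j}\varphi(\xi)$, i.e.\ you are using the standard backward difference $\bar\Delta_{\xi_j}\varphi(\xi)=\varphi(\xi)-\varphi(\xi-\delta_j)$. The paper's own definition (\ref{eqn6}) instead reads $\bar\Delta_{\xi_j}a(\xi)=a(\xi)-a(\xi+\delta_j)$, which is just $-\Delta_{\xi_j}a(\xi)$; with that reading, the lemma would collapse to $\sum_{\xi}\varphi\,\Delta^{\alpha}_{\xi}\psi=\sum_{\xi}(\Delta^{\alpha}_{\xi}\varphi)\,\psi$, which fails already for finitely supported $\varphi,\psi$ (take $\varphi$ the indicator of $0$ and $\psi$ the indicator of $1$). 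So the paper's displayed definition is evidently a typo for $a(\xi)-a(\xi-\delta_j)$, and your silent correction is the reading consistent with Ruzhansky--Turunen; it is worth flagging explicitly, since with the paper's literal definition your base-case identification would not be valid.
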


\begin{defn}
Let $m\in \mathbb{R}, 0 \leq \delta, \rho \leq 1$.
The H\"{o}rmander symbol  class $S^{m}_{\rho,\delta}(\mathbb{T}^n \times\mathbb{Z}^n)$ consists of functions $a(x, \xi)\in C^{\infty}(\mathbb{T}^n \times\mathbb{Z}^n)$ which satisfy the estimation: for $\alpha,\beta\in\mathbb{N}^{n}$,
there exists a constant $C_{\alpha,\beta}> 0$ such that
\begin{align}
\left| \Delta_{\xi}^{\alpha}\partial^\beta_{x}a(x,\xi)\right| \leq C_{\alpha,\beta}\langle\xi\rangle^{m-\rho|\alpha|+\delta|\beta|}, \quad\forall x \in\mathbb{T}^n, \quad\forall\xi\in\mathbb{Z}^n.
\end{align}
The periodic Fourier integral operator (or Fourier series operator) associated to the symbol $a(x,\xi)$  and phase function $\phi(x, \xi)$ denoted by $A_{\phi, a}$ is defined by 
\begin{align}
A_{\phi, a}f(x)=\sum_{\xi\in\mathbb{Z}^{n}}e^{2\pi i\phi(x, \xi)}a(x, \xi)(\mathcal{F}_{\mathbb{T}^{n}}f)(\xi),\quad \forall f\in C^{\infty}(\mathbb{T}^{n}),
\end{align}
where $\phi:\mathbb{T}^{n}\times\mathbb{Z}^{n}\rightarrow \mathbb{R}$  is positively homogeneous  of degree 1 in $\xi\neq0$ and the function $x\mapsto e^{2\pi i\phi(x, \xi)}$ is 1-periodic for all $\xi\in\mathbb{Z}^{n}.$
\end{defn}
In the sequel, the operator $A_{\phi, a}$ will be denoted  $A.$\\
In \cite{RT}, the authors mentioned the result below which  shows that properties of toroidal symbols automatically imply certain properties for differences. The proof follows their Proposition 3.3.4:
	let $a(x, \xi)\in C^{k}(\mathbb{T}^{n}\times\mathbb{Z}^{n})$, $k\in\mathbb{N}$. For every $\alpha\in\mathbb{N}^{n}$, and $\beta\in\mathbb{N}^{n}$, $|\beta|\leq k$ we have the identity 
\begin{align}
	\Delta_{\xi}^{\alpha}\partial_{x}^{\beta}a(x, \xi)=\sum_{|\gamma|\leq|\alpha|}(-1)^{|\alpha-\gamma|}
	\begin{pmatrix}
	\alpha \\ 
	\gamma
	\end{pmatrix} \partial_{x}^{\beta}a(x, \xi+\gamma),\quad \forall (x, \xi)\in\mathbb{T}^{n}\times\mathbb{Z}^{n}.
\end{align}

\subsection{Some basic tools on variable exponent and weight functions }

\begin{defn}
Let $\mathcal{P}(\mathbb{T}^{n})$ be the set of all measurable and 1-periodic functions $p(\cdot):\mathbb{T}^{n}\rightarrow (0,\infty]$ and let  $p_-=ess \inf_{x\in\mathbb{T}^{n}}p(x)$ and $p_+=ess\sup_{x\in\mathbb{T}^{n}}p(x).$
The function $p(\cdot)\in\mathcal{P}(\mathbb{T}^{n})$ is said locally log-H\"older continuous, abbreviated $p \in C^{\log}_{\text{loc}}(\mathbb{T}^{n})$, if there exists a constant $c_{\log}(p) > 0$ such that
$$|p(x)-p(y)|\leq\frac{c_{\log}(p)}{-\log|x-y|},\quad x, y\in\mathbb{T}^{n}, |x-y|\leq\frac{1}{2}. $$
\end{defn}

\begin{defn}
Let $p(\cdot)\in\mathcal{P}(\mathbb{T}^{n})$. The variable exponent Lebesgue space $L^{p(\cdot)}(\mathbb{T}^{n})$ is
 the set of all measurable, 1-periodic functions $f$ on $\mathbb{T}^{n}$ such that $\varrho_{p(\cdot)}\left( \frac{f}{\lambda}\right)  < \infty$ for some $\lambda>0$, equipped with the Luxemburg norm
\begin{equation*}
	\left\|f \right\|_{L^{p(\cdot)}(\mathbb{T}^{n})} =\inf\left\lbrace \lambda>0: \varrho_{p(\cdot)}\left( \frac{f}{\lambda}\right) \leq 1\right\rbrace, 
\end{equation*}
where $\varrho_{p(\cdot)}\left(\dfrac{f}{\lambda} \right)=\displaystyle{\int_{\mathbb{T}^{n}}}\left| \dfrac{f(x)}{\lambda}\right| ^{p(x)}dx$.
\end{defn}

\begin{lm}(Theorem 4.3.12, \cite{D})\label{l4} If $p(\cdot)\in\mathcal{P}(\mathbb{T}^{n})$ with $p_+<\infty$, then $C^{\infty}_{0}(\mathbb{T}^{n})$ is dense in $L^{p(\cdot)}(\mathbb{T}^{n}).$
\end{lm}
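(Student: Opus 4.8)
The plan is to approximate an arbitrary $f\in L^{p(\cdot)}(\mathbb{T}^n)$ by elements of $C^\infty_0(\mathbb{T}^n)$ through three successive reductions, using the hypothesis $p_+<\infty$ only to convert smallness of the modular into smallness of the Luxemburg norm. The first thing I would record is the elementary bound: if $\varrho_{p(\cdot)}(g)\le\varepsilon\le1$, then, since $p(x)/p_+\le1$ and $\varepsilon\le1$,
\[
\varrho_{p(\cdot)}\!\left(\frac{g}{\varepsilon^{1/p_+}}\right)=\int_{\mathbb{T}^n}|g(x)|^{p(x)}\,\varepsilon^{-p(x)/p_+}\,dx\le\varepsilon^{-1}\,\varrho_{p(\cdot)}(g)\le1,
\]
whence $\|g\|_{L^{p(\cdot)}(\mathbb{T}^n)}\le\varepsilon^{1/p_+}$. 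Thus it suffices to exhibit, for each target function, smooth approximants whose \emph{modular} distance to it is arbitrarily small. The same computation shows that $f\in L^{p(\cdot)}(\mathbb{T}^n)$ forces $\varrho_{p(\cdot)}(f)<\infty$, which legitimizes the dominated-convergence arguments below.

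With this in hand I would perform the reductions. First, \emph{truncate to bounded functions}: for $N\in\mathbb{N}$ put $f_N=f\,\chi_{\{|f|\le N\}}$; then $|f-f_N|^{p(x)}\le|f|^{p(x)}\in L^1(\mathbb{T}^n)$ and the integrand tends to $0$ pointwise, so dominated convergence gives $\varrho_{p(\cdot)}(f-f_N)\to0$. Next, \emph{reduce to simple functions}: a bounded measurable function is a uniform limit of simple functions $s_k$ with $|s_k|\le\|f_N\|_\infty$, so $|f_N-s_k|^{p(x)}$ is dominated by a constant and, $\mathbb{T}^n$ having finite measure, dominated convergence again yields $\varrho_{p(\cdot)}(f_N-s_k)\to0$. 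Since each $s_k$ is a finite linear combination of characteristic functions, the triangle inequality reduces everything to approximating a single $\chi_E$, with $E\subseteq\mathbb{T}^n$ measurable, by an element of $C^\infty_0(\mathbb{T}^n)$ in modular.

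For that final step I would invoke inner and outer regularity of Lebesgue measure on the torus: given $\varepsilon>0$, choose a compact $K$ and an open $U$ with $K\subseteq E\subseteq U$ and $|U\setminus K|<\varepsilon$. The smooth Urysohn lemma on the manifold $\mathbb{T}^n$ then produces $g\in C^\infty_0(\mathbb{T}^n)$ with $0\le g\le1$, $g\equiv1$ on $K$ and $\operatorname{supp}g\subseteq U$. Outside $U\setminus K$ the functions $\chi_E$ and $g$ coincide (both equal $1$ on $K$ and $0$ off $U$), while $|\chi_E-g|\le1$ everywhere, so $|\chi_E-g|^{p(x)}\le1$ and
\[
\varrho_{p(\cdot)}(\chi_E-g)=\int_{U\setminus K}|\chi_E-g|^{p(x)}\,dx\le|U\setminus K|<\varepsilon.
\]
By the opening estimate this makes $\|\chi_E-g\|_{L^{p(\cdot)}(\mathbb{T}^n)}$ as small as desired, and chaining the three reductions through the triangle inequality completes the argument.

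The step I expect to be the crux is this last one, precisely because the hypothesis is merely $p_+<\infty$ with \emph{no} log-H\"older control on $p(\cdot)$. That rules out the customary mollification proof: without log-H\"older continuity the convolution and maximal operators need not be bounded on $L^{p(\cdot)}(\mathbb{T}^n)$, so one cannot simply assert $\chi_E*\varphi_\varepsilon\to\chi_E$ in norm. The regularity-plus-smooth-Urysohn construction sidesteps this by producing \emph{uniformly bounded} approximants that differ from $\chi_E$ only on a set of small measure, so the modular is controlled by measure alone and the delicate behaviour of the exponent never enters.
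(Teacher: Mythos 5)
Your proof is correct. Note, however, that the paper offers no internal proof of this lemma: it is imported wholesale by citation to Theorem 4.3.12 of \cite{D}, so there is nothing in the text to compare against; what you have written is essentially a self-contained reconstruction of the standard textbook argument. The structure — norm--modular conversion under $p_+<\infty$, truncation to bounded functions, uniform approximation by simple functions, reduction to a single $\chi_E$, and then inner/outer regularity of Lebesgue measure plus a smooth Urysohn function — is sound, and your closing remark identifies the right point: no log-H\"older continuity of $p(\cdot)$ is needed precisely because the smooth approximant is uniformly bounded and differs from $\chi_E$ only on a set of small measure, so the modular is controlled by measure alone. One caveat worth flagging: the paper defines $\mathcal{P}(\mathbb{T}^n)$ to consist of exponents with values in $(0,\infty]$, so $p_-<1$ is permitted, in which case $\|\cdot\|_{L^{p(\cdot)}(\mathbb{T}^n)}$ is only a quasi-norm and your two appeals to the triangle inequality (splitting a simple function into characteristic functions, and chaining the three reductions) are not literally available. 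This is easily repaired: either invoke the quasi-triangle inequality, or stay at the level of the modular, which is subadditive up to the factor $2^{p_+}$ because $|a+b|^{p(x)}\le 2^{p(x)}\bigl(|a|^{p(x)}+|b|^{p(x)}\bigr)$; since only finitely many terms are chained, nothing of substance changes. Finally, since $\mathbb{T}^n$ is compact, $C^{\infty}_{0}(\mathbb{T}^n)$ coincides with $C^{\infty}(\mathbb{T}^n)$, so the compact-support bookkeeping in your Urysohn step is vacuous.
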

The following results are extremely useful. They are known in the literature for the Euclidean space $\mathbb{R}^n.$
Let's denote by $M$ the maximal operator  and  $M^{\#}$ the sharp operator:
\begin{align*}
	Mf(x)=\sup_{r>0}\frac{1}{\left|B(x, r)\right|}\int_{B(x, r)}|f(y)|dx,\\
	M^{\#}(f)(x)=\sup_{r>0}\frac{1}{\left|B(x, r)\right|}\int_{{B(x, r)}}\left| f(y)-f_{B}(x)\right|dy, 
\end{align*} 

\medskip
where $f_{B}(x)={\displaystyle\sup_{r>0}\frac{1}{B(x, r)}\int_{{B(x_0, r)}}|f(y)|dy}$.

\begin{theo}\label{mx}
Let $p(\cdot)\in\mathcal{P}(\mathbb{T}^{n})$. Suppose $1<p_-\leq p_+<\infty$. Then the following properties are equivalent:
\begin{enumerate}
\item The maximal operator $M$ is bounded in $L^{p(\cdot)}(\mathbb{T}^{n})$;
\item The maximal operator $M$ is bounded in $L^{p'(\cdot)}(\mathbb{T}^{n})$, with $\frac{1}{p(\cdot)}+\frac{1}{p'(\cdot)}=1$;
\item There exists $p_0>1$ such that the maximal operator is bounded in $L^{\left( p(\cdot)/p_0\right) }(\mathbb{T}^{n})$.
\end{enumerate}
\end{theo}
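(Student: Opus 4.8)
The plan is to reduce the statement to the corresponding equivalence on $\mathbb{R}^n$, namely Diening's theorem on the self-improving and duality properties of the Hardy--Littlewood maximal operator (see \cite{D}), and then to transfer it to the torus via the identification $\mathbb{T}^n\cong[0,1)^n$ together with the $1$-periodic extension of $p(\cdot)$. The one subtlety at the outset is that on $\mathbb{T}^n$ a ball $B(x,r)$ saturates the whole torus once $r$ exceeds the diameter, so the toroidal maximal operator differs from the Euclidean one only through a harmless finite-measure tail: for large $r$ the average is comparable to the mean of $|f|$ over $\mathbb{T}^n$, which is controlled in any $L^{p(\cdot)}$ with $p_+<\infty$. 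I would first record this comparison, so that boundedness of $M$ on $L^{p(\cdot)}(\mathbb{T}^n)$ becomes equivalent to boundedness of the Euclidean maximal operator on the class of $1$-periodic functions, and likewise for $p'(\cdot)$ and $p(\cdot)/p_0$.

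Granting this, the implication (iii)$\,\Rightarrow\,$(i) is elementary and can be written directly. Setting $q(\cdot)=p(\cdot)/p_0$ with $p_0>1$, Jensen's inequality gives the pointwise bound $(Mf)^{p_0}\leq M(|f|^{p_0})$, while the homogeneity of the Luxemburg norm yields $\|g\|_{L^{p(\cdot)}}^{p_0}=\||g|^{p_0}\|_{L^{q(\cdot)}}$. Combining these with the assumed boundedness of $M$ on $L^{q(\cdot)}(\mathbb{T}^n)$,
\begin{align*}
\|Mf\|_{L^{p(\cdot)}}^{p_0}=\|(Mf)^{p_0}\|_{L^{q(\cdot)}}\leq\|M(|f|^{p_0})\|_{L^{q(\cdot)}}\leq C\,\||f|^{p_0}\|_{L^{q(\cdot)}}=C\,\|f\|_{L^{p(\cdot)}}^{p_0},
\end{align*}
so $M$ is bounded on $L^{p(\cdot)}(\mathbb{T}^n)$.

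The substantial content lies in (i)$\,\Rightarrow\,$(iii) and the self-duality (i)$\,\Leftrightarrow\,$(ii). For the first I would invoke the left-openness of the class of exponents for which $M$ is bounded: boundedness of $M$ on $L^{p(\cdot)}$ self-improves to boundedness on the slightly smaller exponent $L^{p(\cdot)/p_0}$ for some $p_0>1$ (equivalently $p_-/p_0>1$). This is Diening's deep result, established through the study of averaging operators over families of balls and the class-$\mathcal{A}$ characterisation; I would transfer it by applying it to the periodised exponent on $\mathbb{R}^n$ and pulling the conclusion back through the comparison of the first paragraph. For (i)$\,\Leftrightarrow\,$(ii) I would use the norm conjugate (associate space) formula $\|g\|_{L^{p(\cdot)}}\approx\sup\{\int_{\mathbb{T}^n}|fg|:\|f\|_{L^{p'(\cdot)}}\leq 1\}$, valid because $1<p_-\leq p_+<\infty$ makes $L^{p(\cdot)}(\mathbb{T}^n)$ a reflexive Banach function space with K\"othe dual $L^{p'(\cdot)}(\mathbb{T}^n)$; this lets the maximal operator and its self-duality be analysed symmetrically in $p(\cdot)$ and $p'(\cdot)$.

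The main obstacle I anticipate is precisely the transference step for the two hard implications. Unlike the soft Jensen argument, left-openness and self-duality are not purely pointwise, so one must check that periodisation does not corrupt the geometric covering input (Besicovitch/Vitali type) underlying Diening's averaging-operator estimates. Because the torus is compact and the relevant radii can be truncated at the diameter, I expect this to succeed, but verifying that the constants in the class-$\mathcal{A}$ estimates remain uniform under periodisation, and that the duality $(L^{p(\cdot)}(\mathbb{T}^n))'=L^{p'(\cdot)}(\mathbb{T}^n)$ holds with comparable constants, will be the delicate part of the argument.
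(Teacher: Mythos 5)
Your proposal is correct and takes essentially the same route as the paper: the paper's entire proof of this theorem is the one-line observation that, under the identification of $\mathbb{T}^{n}$ with the cube $[0,1)^{n}$, the statement is Diening's equivalence theorem (Theorem 3.35 of \cite{DCU}) with $\Omega$ (or $\mathbb{R}^{n}$) replaced by $\mathbb{T}^{n}$. Your version is in fact more careful than the paper's, since the wrapping/periodization comparison of the toroidal and Euclidean maximal operators and the Jensen-plus-homogeneity argument for (iii)$\Rightarrow$(i) are precisely the details the paper leaves unaddressed.
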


Since $\mathbb{T}^{n}$ is identified with the cube $[0; 1)^{n}\subset\mathbb{R}^{n}$, the above theorem is similar to the Theorem 3.35 of \cite{DCU} where $\Omega$ or $\mathbb{R}^n$ is replaced by $\mathbb{T}^n.$

\begin{defn}
An operator $T$ is of weak type $(1, 1)$	if there is a constant $C > 0$ such that for every $\lambda > 0$ we have
\begin{align*}
meas\left\lbrace x\in\mathbb{T}^{n} : |Tu(x)|>\lambda\right\rbrace \leq C\frac{\left\|u \right\|_{L^{1}(\mathbb{T}^{n})}} {\lambda}.
\end{align*}
\end{defn}

\begin{theo}[\cite{P} Theorem 2.1]\label{sk}
Let $T$ be a linear operator associated to a kernel $K$ that satisfies the following conditions
\begin{align}\label{a}
\sup_{|\alpha|=1}\sup_{x, y\in\mathbb{T}^{n}}\left\| y\right\|^{n+1}\left| \partial_{x}^{\alpha}K(x, y)\right|<\infty, \\
\sup_{|\beta|=1}\sup_{x, y\in\mathbb{T}^{n}}\left\| x\right\|^{n+1}\left| \partial_{y}^{\beta}K(x, y)\right|<\infty
\end{align}
and $T$ is of weak type (1, 1). Then for $0 < s < 1$, there exists a constant $C_s>0$ such that
	\begin{align}
	M_s^{\#}\left( Tf\right) (x)\leq C_s Mf(x),\quad \forall f\in C^{\infty}_{0}(\mathbb{T}^{n}).
	\end{align}
\end{theo}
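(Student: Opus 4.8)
The plan is to follow the classical Fefferman--Stein circle of ideas, combining the weak type $(1,1)$ hypothesis with the smoothness of the kernel through a local/global splitting adapted to each ball. Recall that for $0<s<1$ the fractional sharp maximal operator is
$$M_s^{\#}(g)(x)=\sup_{B\ni x}\inf_{c\in\mathbb{C}}\left(\frac{1}{|B|}\int_B |g(y)-c|^s\,dy\right)^{1/s},$$
the supremum running over all balls $B=B(x_0,r)$ containing $x$. Thus it suffices to fix such a ball and exhibit a single constant $c$ for which $\left(\frac{1}{|B|}\int_B|Tf(y)-c|^s\,dy\right)^{1/s}\le C\,Mf(x)$, with a constant independent of $B$ and $f$. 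First I would split $f=f_1+f_2$, where $f_1=f\chi_{2B}$ is the part of $f$ supported near $B$ and $f_2=f\chi_{\mathbb{T}^n\setminus 2B}$ is the far part, and take $c=Tf_2(x_0)$. By linearity $Tf-c=Tf_1+\bigl(Tf_2-Tf_2(x_0)\bigr)$, and since $0<s<1$ the elementary inequality $|a+b|^s\le |a|^s+|b|^s$ reduces the estimate to bounding the two averages separately.

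For the local part, the key tool is Kolmogorov's inequality, which is exactly where the restriction $0<s<1$ is used. Since $T$ is of weak type $(1,1)$, one obtains
$$\left(\frac{1}{|B|}\int_B|Tf_1(y)|^s\,dy\right)^{1/s}\le C_s\,\frac{\|f_1\|_{L^{1}(\mathbb{T}^{n})}}{|B|}=C_s\,\frac{1}{|B|}\int_{2B}|f(y)|\,dy\le 2^n C_s\,Mf(x),$$
the last step following from $x\in 2B$ and the definition of $M$ applied to the ball $2B$.

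For the global part I would use the smoothness of the kernel. Writing $Tf_2(y)-Tf_2(x_0)=\int_{\mathbb{T}^n\setminus 2B}\bigl(K(y,z)-K(x_0,z)\bigr)f(z)\,dz$, the mean value theorem together with the gradient estimate \eqref{a} yields a bound of the form $|K(y,z)-K(x_0,z)|\le C\,|y-x_0|\,|z-x_0|^{-(n+1)}$ for $z\notin 2B$ and $y\in B$. Decomposing the complement of $2B$ into the dyadic annuli $2^{k+1}B\setminus 2^kB$, $k\ge 1$, on which $|z-x_0|\sim 2^k r$ while $|y-x_0|\le r$, each annulus contributes at most $C\,2^{-k}\,\frac{1}{|2^{k+1}B|}\int_{2^{k+1}B}|f|\le C\,2^{-k}Mf(x)$, and summing the resulting geometric series gives $\sup_{y\in B}|Tf_2(y)-Tf_2(x_0)|\le C\,Mf(x)$; this simultaneously shows that $Tf_2(x_0)$ is well defined.

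Combining the two bounds through the subadditivity noted above, taking the $1/s$ power and then the supremum over all balls $B\ni x$, completes the argument. The main obstacle is the global estimate: one must justify the absolute convergence of the integral defining $Tf_2(x_0)$ and the summability of the dyadic decomposition, both of which hinge on converting the pointwise derivative bounds \eqref{a} into the difference estimate for $K(y,\cdot)-K(x_0,\cdot)$ via the mean value theorem. The local estimate, by contrast, is comparatively routine once Kolmogorov's inequality is invoked.
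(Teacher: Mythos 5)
First, a point of reference: the paper itself gives no proof of this theorem --- it is imported as a black box from Alvarez and P\'erez [\cite{P}, Theorem 2.1] and only invoked later, in the proof of Theorem \ref{t5} --- so there is no internal argument to compare yours against. What you wrote is the classical Fefferman--Stein-type proof, which is essentially the argument of the cited source: split $f=f\chi_{2B}+f\chi_{\mathbb{T}^n\setminus 2B}$, take $c=Tf_2(x_0)$, handle the local part by Kolmogorov's inequality together with the weak $(1,1)$ hypothesis, and the global part by kernel smoothness and dyadic annuli. The local half and the final assembly are correct; two harmless remarks are that with the centered maximal function used in the paper you should enlarge $2B=B(x_0,2r)$ to $B(x,3r)$ (costing a factor $(3/2)^n$), and that your $\inf_c$ definition of $M_s^{\#}$ dominates $\bigl[M^{\#}(|\cdot|^s)\bigr]^{1/s}$ via $\bigl||a|^s-|c|^s\bigr|\le|a-c|^s$, so your estimate does yield the stated conclusion.

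The one step that is not justified as written is the passage from \eqref{a} to the difference bound $|K(y,z)-K(x_0,z)|\le C\,|y-x_0|\,|z-x_0|^{-(n+1)}$. Condition \eqref{a} bounds $|\nabla_xK(x,y)|$ by $C\|y\|^{-(n+1)}$, where $\|y\|$ is the size of the \emph{second kernel argument itself}, not the distance from $z$ to the center of the ball; a single application of the mean value theorem therefore gives decay in $\|z\|$, which is useless on annuli centered at $x_0$. The hypotheses only make sense --- and only match both \cite{P} and the companion Theorem \ref{1t3} --- if the kernel is taken in the difference form $Tf(x)=\int_{\mathbb{T}^n}K(x,x-y)f(y)\,dy$, so that the singularity $\|\cdot\|^{-(n+1)}$ sits on the diagonal. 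Under that reading, the quantity to estimate in the global part is $K(y,y-z)-K(x_0,x_0-z)$, in which \emph{both} slots of $K$ move, so the mean value theorem must be applied twice: the variation in the first slot is controlled by \eqref{a}, and the variation in the second slot by the second displayed condition (whose weight $\|x\|^{n+1}$ is evidently a misprint for $\|y\|^{n+1}$, as otherwise it gives blow-up when the ball center approaches the origin), using that $\|y-z\|$ and $\|x_0-z\|$ are both $\ge|z-x_0|/2$ when $y\in B$ and $z\notin 2B$. So you need both kernel conditions, not just \eqref{a}. Once this is repaired, the dyadic summation proceeds exactly as you wrote (on the torus the annuli terminate at scale comparable to $1$, so the series is in fact a finite sum). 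In short: the architecture of your proof is the right one and agrees with the cited source, but the global estimate as stated does not follow from hypothesis \eqref{a} alone; it requires the difference-kernel interpretation and both smoothness conditions.
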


The weight functions in $L^{p}$ spaces are useful in the proof of general and precise  regularity results.
\begin{defn} Let $w\in L^{1}_{loc}(\mathbb{T}^{n})$ a non-negative function. Then  $w$ belongs to the Muckenhoupt  weights space $A_{p_0}$ for $1<p_0<\infty$ if 
\begin{align}\label{w}
[w]_{p_0}:=\sup_{\mathcal{Q}}\left( \frac{1}{|\mathcal{Q}|}\int_{\mathcal{Q}}w(x)dx\right)\left( \frac{1}{|\mathcal{Q}|}\int_{\mathcal{Q}}w(x)^{-\frac{1}{p_0-1}}dx\right)^{p_0-1} <\infty,
\end{align} where  $\mathcal{Q}$ is a cube in $\mathbb{T}^{n}$.\\
By definition  $w\in A_1$ if there exists a constant $C>0$  such that $Mw(x)\leq Cw(x)$ for all $x\in\mathbb{T}^{n}$.
\end{defn}

\begin{expl} [\cite{10} Example 1]
The function $ |x|^{\alpha} $ is an $ A_p $ weighted, for $ 1 < p < \infty $, if and only if $ -n < \alpha < n(p-1) $.
\end{expl}

\begin{lm}[\cite{SG} Property 2]\label{wgt}
Suppose that $w$ is in $A_p$ for some $p\in[1,~ \infty]$ and $0 < \delta < 1$. Then $w$ belongs to $A_q$
where $q=\delta p+1-\delta$. Moreover, $[w^{\delta}]_{p}\leq[w]^{\delta}_{p}.$
\end{lm}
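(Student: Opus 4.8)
The plan is to work directly from the definition of the Muckenhoupt functional in \eqref{w} and to reduce both assertions to the concavity of $t\mapsto t^{\delta}$ on $(0,\infty)$, that is, to Jensen's inequality $\frac{1}{|\mathcal{Q}|}\int_{\mathcal{Q}}g^{\delta}\le\big(\frac{1}{|\mathcal{Q}|}\int_{\mathcal{Q}}g\big)^{\delta}$, valid for $0<\delta<1$ and any nonnegative $g$. No reverse H\"older inequality or self-improvement of the class is needed; the whole statement is an exercise in tracking exponents and placing the weight $w^{\delta}$ inside the correct $A$-functional.

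First I would fix an arbitrary cube $\mathcal{Q}\subset\mathbb{T}^{n}$ and write out the $A_{q}$ functional of $w^{\delta}$ for $q=\delta p+1-\delta$. The decisive point is the algebraic identity $q-1=\delta(p-1)$, which forces the dual exponent to collapse, namely $\frac{\delta}{q-1}=\frac{1}{p-1}$. Consequently the second averaged factor in $[w^{\delta}]_{q}$ is literally
$$\Big(\frac{1}{|\mathcal{Q}|}\int_{\mathcal{Q}} w^{-\frac{\delta}{q-1}}\Big)^{q-1}=\Big(\frac{1}{|\mathcal{Q}|}\int_{\mathcal{Q}} w^{-\frac{1}{p-1}}\Big)^{\delta(p-1)},$$
with no estimate required there. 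This exponent bookkeeping is the genuine crux; once it is in place the rest is mechanical.

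Then I would bound the first factor by Jensen, $\frac{1}{|\mathcal{Q}|}\int_{\mathcal{Q}} w^{\delta}\le\big(\frac{1}{|\mathcal{Q}|}\int_{\mathcal{Q}} w\big)^{\delta}$, multiply it with the simplified second factor, and recognize the product as $\big[(\text{avg}_{\mathcal{Q}}w)(\text{avg}_{\mathcal{Q}}w^{-1/(p-1)})^{p-1}\big]^{\delta}$. Taking the supremum over all cubes $\mathcal{Q}$ then yields the membership $w^{\delta}\in A_{q}$ together with $[w^{\delta}]_{q}\le[w]_{p}^{\delta}$. For the stated form of the Moreover part, where the constant is measured at the subscript $p$, I would instead expand the $A_{p}$ functional of $w^{\delta}$ and apply Jensen to \emph{both} averaged integrals, using $w^{-\delta/(p-1)}=(w^{-1/(p-1)})^{\delta}$ on the second; the two concavity estimates combine in exactly the same way to give $[w^{\delta}]_{p}\le[w]_{p}^{\delta}$.

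The only friction I anticipate is at the endpoints $p=1$ and $p=\infty$, where \eqref{w} is replaced by its limiting forms. For $p=1$ one has $q=1$, and the $A_{1}$ characterization $\text{avg}_{\mathcal{Q}}w\le C\,\mathrm{ess\,inf}_{\mathcal{Q}}w$ combines with Jensen to give $\text{avg}_{\mathcal{Q}}w^{\delta}\le C^{\delta}\,\mathrm{ess\,inf}_{\mathcal{Q}}w^{\delta}$, so that $w^{\delta}\in A_{1}$; for $p=\infty$ one first passes to a finite exponent via $A_{\infty}=\bigcup_{1\le r<\infty}A_{r}$ and applies the previous case. These endpoints are routine, but they are the places where the uniform computation above must be adapted to the correct characterization.
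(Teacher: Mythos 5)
The paper offers no proof of this lemma at all: it is imported verbatim from \cite{SG} (Property~2), so there is nothing internal to compare against. Your argument is correct and is essentially the standard proof from that reference: the identity $q-1=\delta(p-1)$ collapses the dual exponent, Jensen's inequality for $t\mapsto t^{\delta}$ handles the first average, and the endpoint cases $p=1$ and $p=\infty$ are treated by the usual characterizations. Note also that you have (rightly) read the statement as asserting $w^{\delta}\in A_{q}$ with $[w^{\delta}]_{q}\leq[w]_{p}^{\delta}$, silently correcting two typos in the paper's formulation (``$w$ belongs to $A_q$'' and the subscript $p$ in $[w^{\delta}]_{p}$); your separate two-Jensen argument for the literal inequality $[w^{\delta}]_{p}\leq[w]_{p}^{\delta}$ is a valid bonus, and the corrected reading is the one actually used later in the paper (Theorems \ref{t3} and \ref{t5}), where $w\in A_{p_-}$ is deduced for $w^{t}\in A_{tp_-+1-t}\subset A_{p_-}$.
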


Next, we state the extrapolation theorem of Rubio de Francia \cite{DCU} applied to the torus.
\begin{theo}\label{t1}
Suppose that for $p_0>1$ and $\mathcal{F}$ is a family of pairs non-negative measurable functions such that for all $w\in A_1$
\begin{align}
\int_{\mathbb{T}^{n}} F(x)^{p_0}w(x)dx\leq c_{p_0}\int_{\mathbb{T}^{n}} G(x)^{p_0}w(x)dx, \qquad (F, G)\in\mathcal{F}.
\end{align}
If $p(\cdot)\in\mathcal{P}(\mathbb{T}^{n})$, $p_0\leq p_{-}\leq p_{+}<\infty$ and the maximal operator $M$ is bounded on $L^{\left( \frac{p(\cdot)}{p_0}\right) '}(\mathbb{T}^{n})$, then there exists a  constant $C>0$ such that
\begin{align}
	\left\|F \right\|_{L^{p(\cdot)}(\mathbb{T}^{n})} \leq C \left\|G \right\|_{L^{p(\cdot)}(\mathbb{T}^{n})}, \qquad (F, G)\in\mathcal{F}.
\end{align}	
\end{theo}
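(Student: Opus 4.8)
The plan is to run the Rubio de Francia extrapolation scheme, transplanting the Euclidean argument of \cite{DCU} to the torus (which, being identified with the cube $[0,1)^n$, introduces no essential new difficulty). Set $q(\cdot)=p(\cdot)/p_0$, so that the hypothesis $p_0\leq p_-\leq p_+<\infty$ yields $1\leq q_-\leq q_+<\infty$, and the standing assumption is exactly that the maximal operator $M$ is bounded on $L^{q'(\cdot)}(\mathbb{T}^{n})$, where $\tfrac{1}{q(\cdot)}+\tfrac{1}{q'(\cdot)}=1$. Since $\|F\|_{L^{p(\cdot)}(\mathbb{T}^{n})}^{p_0}=\|F^{p_0}\|_{L^{q(\cdot)}(\mathbb{T}^{n})}$, it suffices to prove the scalar estimate $\|F^{p_0}\|_{L^{q(\cdot)}(\mathbb{T}^{n})}\leq C\|G^{p_0}\|_{L^{q(\cdot)}(\mathbb{T}^{n})}$ for every pair $(F,G)\in\mathcal{F}$, and we may assume $\|G^{p_0}\|_{L^{q(\cdot)}(\mathbb{T}^{n})}<\infty$, for otherwise there is nothing to prove.

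First I would invoke the associate-space (duality) characterization of the variable exponent norm: there is a constant such that
$$\|F^{p_0}\|_{L^{q(\cdot)}(\mathbb{T}^{n})}\leq C\sup\left\{\int_{\mathbb{T}^{n}}F(x)^{p_0}h(x)\,dx\;:\;h\geq0,\ \|h\|_{L^{q'(\cdot)}(\mathbb{T}^{n})}\leq1\right\},$$
this identity being valid with both sides possibly infinite. The whole task reduces to bounding $\int_{\mathbb{T}^{n}}F^{p_0}h\,dx$ uniformly over such test functions $h$.

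The heart of the argument is the Rubio de Francia iteration. Writing $\|M\|$ for the operator norm of $M$ on $L^{q'(\cdot)}(\mathbb{T}^{n})$, I would define
$$\mathcal{R}h=\sum_{k=0}^{\infty}\frac{M^{k}h}{2^{k}\,\|M\|^{k}},$$
where $M^{k}$ is the $k$-fold composition of the maximal operator and $M^{0}h=h$. Three properties need to be checked, all using only that $M$ is subadditive and bounded on $L^{q'(\cdot)}(\mathbb{T}^{n})$: (i) $h\leq\mathcal{R}h$ pointwise, immediate from the $k=0$ term; (ii) $\|\mathcal{R}h\|_{L^{q'(\cdot)}(\mathbb{T}^{n})}\leq2\|h\|_{L^{q'(\cdot)}(\mathbb{T}^{n})}$, since the $k$-th term has norm at most $2^{-k}\|h\|$ and the geometric series sums to $2$; and (iii) $\mathcal{R}h\in A_{1}$ with an $A_1$-constant bounded by $2\|M\|$, obtained from $M(\mathcal{R}h)\leq\sum_{k\geq0}2^{-k}\|M\|^{-k}M^{k+1}h=2\|M\|\sum_{j\geq1}2^{-j}\|M\|^{-j}M^{j}h\leq2\|M\|\,\mathcal{R}h$.

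With the $A_{1}$ weight $w=\mathcal{R}h$ in hand, the weighted hypothesis of the theorem applies, and I would chain the estimates, noting first that $\int_{\mathbb{T}^{n}}G^{p_0}\mathcal{R}h\,dx<\infty$ by H\"older's inequality together with (ii) and $\|G^{p_0}\|_{L^{q(\cdot)}(\mathbb{T}^{n})}<\infty$:
$$\int_{\mathbb{T}^{n}}F^{p_0}h\,dx\leq\int_{\mathbb{T}^{n}}F^{p_0}\,\mathcal{R}h\,dx\leq c_{p_0}\int_{\mathbb{T}^{n}}G^{p_0}\,\mathcal{R}h\,dx\leq C\,\|G^{p_0}\|_{L^{q(\cdot)}(\mathbb{T}^{n})}\,\|\mathcal{R}h\|_{L^{q'(\cdot)}(\mathbb{T}^{n})}\leq2C\,\|G^{p_0}\|_{L^{q(\cdot)}(\mathbb{T}^{n})},$$
where the first inequality is (i), the second is the weighted hypothesis with $w=\mathcal{R}h\in A_{1}$, the third is H\"older's inequality in the variable exponent pair $(q(\cdot),q'(\cdot))$, and the last uses (ii) with $\|h\|_{L^{q'(\cdot)}(\mathbb{T}^{n})}\leq1$. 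Taking the supremum over admissible $h$ and recalling $\|F^{p_0}\|_{L^{q(\cdot)}(\mathbb{T}^{n})}=\|F\|_{L^{p(\cdot)}(\mathbb{T}^{n})}^{p_0}$ gives the conclusion. I expect the main obstacle to be technical rather than conceptual: verifying the convergence of the defining series for $\mathcal{R}h$ and the subadditivity estimate in (iii) for an infinite sum, and correctly handling the case where the left-hand norm is a priori infinite via the associate-space identity above—points at which the torus behaves exactly as $\mathbb{R}^{n}$.
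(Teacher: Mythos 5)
Your proposal is correct, and it is in substance the same argument the paper relies on: the paper's own proof of Theorem \ref{t1} is a single sentence transferring Theorem 4.24 of \cite{DCU} from $\mathbb{R}^{n}$ to $\mathbb{T}^{n}$ via the identification with the cube $[0,1)^{n}$, and the proof of that cited theorem is exactly the Rubio de Francia scheme you wrote out --- rescaling to $q(\cdot)=p(\cdot)/p_0$, the associate-norm duality, the iteration $\mathcal{R}h=\sum_{k\geq 0}M^{k}h/(2^{k}\|M\|^{k})$ with properties (i)--(iii), and the H\"older chaining. The difference is one of completeness rather than of route: where the paper outsources the entire argument to the citation, you actually verify that each ingredient survives the passage to the torus (the hypothesis gives boundedness of $M$ on $L^{q'(\cdot)}(\mathbb{T}^{n})$, the associate-space inequality and H\"older's inequality for the pair $(q(\cdot),q'(\cdot))$ hold on $\mathbb{T}^{n}$ exactly as on $\mathbb{R}^{n}$, and countable subadditivity of $M$ applied to the series defining $\mathcal{R}h$ is purely measure-theoretic), which is precisely the content the paper leaves implicit. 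Your argument also has a small robustness advantage worth noting: even if the constant $c_{p_0}$ in the weighted hypothesis were allowed to depend on $[w]_{A_1}$, as it does in the standard formulation of extrapolation, your chaining still closes, because the weights you feed into the hypothesis satisfy the uniform bound $[\mathcal{R}h]_{A_1}\leq 2\|M\|$ independent of $h$.
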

\begin{proof}
Since the torus is identified to the cube $[0; 1)^{n}\subset\mathbb{R}^{n},$ we replace $\mathbb{R}^{n}$ by $\mathbb{T}^{n}$ in the Theorem 4.24 \cite{DCU}.
\end{proof}

\begin{defn}
Let $w\in L^{1}_{loc}(\mathbb{T}^{n})$ be a weight.
\begin{enumerate}		 	
\item If $1 < p < \infty,$ $L^{p}_{w}(\mathbb{T}^n)$ is the
space of all functions $f :\mathbb{T}^n\longrightarrow \mathbb{C}$ with finite quasi-norm
$$\left\| f\right\|_{L^{p}_{w}(\mathbb{T}^n)}= \int_{\mathbb{T}^{n}}\left| f(x)\right|^{p}w(x)dx. $$
\item If $p(\cdot)\in \mathcal{P}(\mathbb{T}^{n})$ such that $1< p_{-}\leq p(x)\leq p_{+}<\infty,$ $L^{p(\cdot)}_{w}(\mathbb{T}^n)$ is the space of all functions $f :\mathbb{T}^n\longrightarrow \mathbb{C}$ with finite quasi-norm
$$\left\| f\right\|_{L^{p(\cdot)}_{w}(\mathbb{T}^n)}= \left\|wf\right\|_{L^{p(\cdot)}(\mathbb{T}^n)}. $$
\end{enumerate}
\end{defn}
	
\begin{prop}\label{poids}
 Let $p(\cdot)\in \mathcal{P}(\mathbb{T}^{n})$ such that $1< p_{-}\leq p(x)\leq p_{+}<\infty$ and $0<s<p_-.$ If $w\in L^{1}_{loc}(\mathbb{T}^{n})$ is a weight, then
\begin{align}\left\| f\right\|_{L^{p(\cdot)}_{w}(\mathbb{T}^n)}=\left\| f^s\right\|^{\frac{1}{s}}_{L^{\frac{p(\cdot)}{s}}_{w^s}(\mathbb{T}^n)}.
\end{align}
\end{prop}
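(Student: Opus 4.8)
The plan is to unwind both weighted norms into the unweighted Luxemburg norm and reduce the claim to a single homogeneity identity for the scalar variable-exponent norm. By the definition of $L^{p(\cdot)}_{w}(\mathbb{T}^n)$ the left-hand side equals $\|wf\|_{L^{p(\cdot)}(\mathbb{T}^n)}$, while the right-hand side, applying the same definition with exponent $p(\cdot)/s$ and weight $w^{s}$, equals $\|w^{s}f^{s}\|^{1/s}_{L^{p(\cdot)/s}(\mathbb{T}^n)}=\|(wf)^{s}\|^{1/s}_{L^{p(\cdot)/s}(\mathbb{T}^n)}$. Writing $g=wf$ (and reading $f^{s}$ as $|f|^{s}$, so that $g\geq 0$), the proposition reduces to the scaling identity
$$\|g^{s}\|_{L^{p(\cdot)/s}(\mathbb{T}^n)}=\|g\|^{s}_{L^{p(\cdot)}(\mathbb{T}^n)}.$$

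First I would check that $q(\cdot):=p(\cdot)/s$ is an admissible exponent. Since $0<s<p_{-}$, one has $q_{-}=p_{-}/s>1$ and $q_{+}=p_{+}/s<\infty$, so $q(\cdot)\in\mathcal{P}(\mathbb{T}^n)$ with $1<q_{-}\leq q_{+}<\infty$; this is precisely the range in which the weighted space $L^{p(\cdot)/s}_{w^{s}}(\mathbb{T}^n)$ is defined, and it is the only place the hypothesis $s<p_{-}$ is used.

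Next I would establish the scaling identity directly from the defining infimum. For $\mu>0$ the modular of $g^{s}$ relative to $q(\cdot)$ satisfies, pointwise,
$$\left(\frac{g(x)^{s}}{\mu}\right)^{q(x)}=\frac{g(x)^{p(x)}}{\mu^{p(x)/s}}=\left(\frac{g(x)}{\mu^{1/s}}\right)^{p(x)},$$
so that $\varrho_{q(\cdot)}\!\left(g^{s}/\mu\right)=\varrho_{p(\cdot)}\!\left(g/\mu^{1/s}\right)$ after integrating over $\mathbb{T}^n$. Performing the substitution $\lambda=\mu^{1/s}$, which is a bijection of $(0,\infty)$ onto itself with inverse $\mu=\lambda^{s}$, the two families of admissible scalars match up and
$$\|g^{s}\|_{L^{q(\cdot)}}=\inf\{\mu>0:\varrho_{q(\cdot)}(g^{s}/\mu)\leq 1\}=\inf\{\lambda^{s}:\varrho_{p(\cdot)}(g/\lambda)\leq 1\}.$$
Because $\lambda\mapsto\lambda^{s}$ is continuous and strictly increasing on $(0,\infty)$, the infimum commutes with this map, giving $\inf\{\lambda^{s}:\cdots\}=\bigl(\inf\{\lambda:\cdots\}\bigr)^{s}=\|g\|^{s}_{L^{p(\cdot)}}$, which is the desired identity. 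Taking $s$-th roots and recalling $g=wf$ then yields the statement of the proposition.

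The computations are elementary; the only points demanding care are the admissibility check above (so that both weighted norms are genuinely defined) and the justification that the infimum passes through the power map $\lambda\mapsto\lambda^{s}$, which I would handle via its continuity and strict monotonicity. I therefore expect no substantive obstacle, only bookkeeping.
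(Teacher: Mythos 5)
Your proof is correct. Note that the paper states Proposition \ref{poids} without any proof at all (it is invoked later, in the proof of Theorem \ref{t5}, as a known fact), so there is no argument of the authors to compare yours against; what you wrote is the natural verification. Both weighted norms unwind, via the paper's definition $\| f\|_{L^{p(\cdot)}_{w}(\mathbb{T}^n)}=\|wf\|_{L^{p(\cdot)}(\mathbb{T}^n)}$, to the single scaling identity $\|g^{s}\|_{L^{p(\cdot)/s}(\mathbb{T}^n)}=\|g\|^{s}_{L^{p(\cdot)}(\mathbb{T}^n)}$ with $g=|wf|$, and your pointwise modular computation $\varrho_{p(\cdot)/s}\left(g^{s}/\lambda^{s}\right)=\varrho_{p(\cdot)}\left(g/\lambda\right)$ together with the substitution $\mu=\lambda^{s}$ establishes exactly that. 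The two points you flag are indeed the only ones requiring care: the hypothesis $0<s<p_{-}$ is used precisely to make $p(\cdot)/s$ an admissible exponent (so that $(p(\cdot)/s)_{-}>1$ and the space $L^{p(\cdot)/s}_{w^{s}}(\mathbb{T}^n)$ is defined in the paper's sense), and the passage of the infimum through the continuous increasing map $\lambda\mapsto\lambda^{s}$ is legitimate for exactly the reason you give.
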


The following theorem is proved for constant $p$ in the non-weighted case in [\cite{Ste}, p. 148] and for variable $p(\cdot)$ in the weighted case in \cite{02}, Lemma 4.1.
\begin{theo}\label{1t3}Let $T$ be an operator with kernel $K$ such that 
\begin{align*}\label{seri}
	Tf(x)=\int_{\mathbb{T}^{n}}K(x, x-y)f(y)dy.
\end{align*} 
	Let $p(\cdot)\in C^{\log}_{loc}(\mathbb{T}^{n})$ such that $1< p_{-}<p_{+}<\infty$ and  $p(x)=p_{\infty}$ for $|x|\geq R$ where $R>0$. Suppose also a weight function $w\in A_{p(\cdot)}$ of the form
	\begin{align*}
	w(x) = (1 + |x|)^{\beta}\prod_{k=1}^{n}|x - x_k|^{\beta_k}, \quad x_k \in \mathbb{T}^{n}.
	\end{align*}
	Then if~~ 

	$-\dfrac{n}{p(x_k)} < \beta_k < \dfrac{n}{p'(x_k)}\quad and 
\quad	-\dfrac{n}{p_{\infty}} < \beta + \sum_{k=1}^{n}\beta_k < \dfrac{n}{p'_{\infty}}, \quad k = 1, \cdots, n,$
there exists a constant $C>0$ such that
	\begin{align}
	\left\| Tf\right\|_{L^{p(\cdot)}_{w}(\mathbb{T}^{n})} \leq C \left\|M^{\#}(|Tf|) \right\| _{L^{p(\cdot)}_{w}(\mathbb{T}^{n})},\quad \forall f\in C^{\infty}_{0}(\mathbb{T}^{n}).
	\end{align}
\end{theo}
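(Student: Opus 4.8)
The plan is to prove this as a weighted Fefferman--Stein inequality by first establishing the analogous estimate for a \emph{constant} exponent with an arbitrary $A_\infty$ weight, and then promoting it to the variable exponent $p(\cdot)$ and the fixed power weight $w$ through the extrapolation machinery of Theorem~\ref{t1}. Writing $g=|Tf|$ and recalling the definition $\|h\|_{L^{p(\cdot)}_w}=\|w\,h\|_{L^{p(\cdot)}}$, the target inequality becomes
\[
\|w\,g\|_{L^{p(\cdot)}(\mathbb{T}^n)}\le C\,\|w\,M^{\#}g\|_{L^{p(\cdot)}(\mathbb{T}^n)}.
\]
Since $f\in C_0^\infty(\mathbb{T}^n)$, $Tf$ is bounded on the compact torus $\mathbb{T}^n$ and, under the constraint $\beta_k>-n/p(x_k)$ the singularities of $w$ are integrable to the relevant powers, so the left-hand side is a priori finite and the statement is non-vacuous. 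The engine is the classical weighted Fefferman--Stein inequality (\cite{Ste}): for every $0<p_0<\infty$ and every $v\in A_\infty$ one has $\int_{\mathbb{T}^n}|g|^{p_0}v\,dx\le C\int_{\mathbb{T}^n}(M^{\#}g)^{p_0}v\,dx$, obtained from the good-$\lambda$ inequality comparing the distribution functions of $Mg$ and $M^{\#}g$ together with the pointwise bound $|g|\le Mg$ a.e.

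To pass to $L^{p(\cdot)}_w$, I would apply Theorem~\ref{t1} to the family of pairs $\mathcal{F}=\{(w\,g,\,w\,M^{\#}g):f\in C_0^\infty(\mathbb{T}^n)\}$, so that its conclusion reads $\|w\,g\|_{L^{p(\cdot)}}\le C\|w\,M^{\#}g\|_{L^{p(\cdot)}}$, exactly the assertion. The pair hypothesis of Theorem~\ref{t1} requires, for every $v\in A_1$,
\[
\int_{\mathbb{T}^n}|g|^{p_0}\,(w^{p_0}v)\,dx\le c_{p_0}\int_{\mathbb{T}^n}(M^{\#}g)^{p_0}\,(w^{p_0}v)\,dx,
\]
which is precisely the constant-exponent inequality of the first paragraph with weight $w^{p_0}v$, valid once $w^{p_0}v\in A_\infty$. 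Here the hypotheses on the exponents enter: by the power-weight characterization (the Example for $|x|^\alpha\in A_p$) the bounds $-n/p(x_k)<\beta_k<n/p'(x_k)$ and $-n/p_\infty<\beta+\sum_k\beta_k<n/p'_\infty$ say exactly that $w\in A_{p(\cdot)}$; choosing $1<p_0\le p_-$ (making room by the rescaling identity of Proposition~\ref{poids} if $p_-$ is close to $1$) then places $w^{p_0}$ in a concrete Muckenhoupt class via Lemma~\ref{wgt}. The remaining hypothesis of Theorem~\ref{t1}, boundedness of $M$ on $L^{(p(\cdot)/p_0)'}(\mathbb{T}^n)$, holds because $p\in C^{\log}_{loc}(\mathbb{T}^n)$ with $1<p_-\le p_+<\infty$ (Theorem~\ref{mx}).

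The step I expect to be the real obstacle is verifying $w^{p_0}v\in A_\infty$ \emph{uniformly} over all $v\in A_1$: for a power weight with some positive exponents $\beta_k$ this does not follow from $w\in A_{p(\cdot)}$ alone, since products of Muckenhoupt weights need not remain in a fixed class. I would resolve it either by replacing Theorem~\ref{t1} with its weighted variant (extrapolation directly within $L^{p(\cdot)}_w$ for $w\in A_{p(\cdot)}$), or --- more in the spirit of the cited works --- by the Rabinovich--Samko localization \cite{VS}: cover $\mathbb{T}^n$ by a neighborhood of each singular point $x_k$, where $p(\cdot)$ is comparable to the constant $p(x_k)$ and $w$ to $|x-x_k|^{\beta_k}\in A_{p(x_k)}$, together with a region where $w$ is bounded, and patch the constant-exponent Fefferman--Stein estimates together. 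Finally, as was done for Theorems~\ref{mx} and \ref{t1}, the cleanest route is simply to transplant Lemma~4.1 of \cite{02} from $\mathbb{R}^n$ to $\mathbb{T}^n\cong[0,1)^n$, the log-H\"older exponent and the power weight $w$ extending without change.
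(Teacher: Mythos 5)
Your closing sentence is, in fact, the paper's entire proof: the paper never argues Theorem \ref{1t3} at all, but states it as the torus transplant of the constant-exponent, unweighted case in [\cite{Ste}, p.~148] and of Lemma 4.1 of \cite{02} for the weighted variable-exponent case, exactly in the way Theorems \ref{mx} and \ref{t1} were transplanted by identifying $\mathbb{T}^n$ with $[0,1)^n$. So your fallback coincides with the paper's route; what remains to assess is your main, self-contained extrapolation argument, and there the obstacle you flag is genuine and, as written, fatal. Theorem \ref{t1} produces conclusions only in the unweighted space $L^{p(\cdot)}(\mathbb{T}^n)$, so you are forced to fold $w$ into the family of pairs, and then the extrapolation hypothesis demands the constant-exponent sharp-function inequality with weight $w^{p_0}v$ for \emph{every} $v\in A_1$, with a constant independent of $v$. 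Membership $w^{p_0}v\in A_\infty$ uniformly in $v$ is simply not available: $A_\infty$ is not stable under products, and no tuning of $p_0$ through Lemma \ref{wgt} or Proposition \ref{poids} can repair this, because the obstruction sits in the product structure, not in the class of $w$ alone. The honest fixes are the ones you name (a genuinely weighted extrapolation theorem whose hypotheses involve the pairs $(|Tf|, M^{\#}|Tf|)$ and whose conclusion lands in $L^{p(\cdot)}_{w}$, or the localization scheme of \cite{VS}), but neither is carried out, so your main route does not close.

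There is a second gap you do not flag, and it undermines the ``engine'' itself: on the torus the inequality $\int_{\mathbb{T}^n}|g|^{p_0}v\,dx\le C\int_{\mathbb{T}^n}(M^{\#}g)^{p_0}v\,dx$ is false as stated. Take $g\equiv 1$: then $M^{\#}g=0$, while $\|g\|_{L^{p_0}(v)}>0$; unlike on $\mathbb{R}^n$, constants belong to every space in play because $\mathbb{T}^n$ has finite measure, so they cannot be excluded by an a priori integrability assumption. The good-$\lambda$ comparison of $Mg$ and $M^{\#}g$ yields the Fefferman--Stein inequality only modulo constants, i.e.\ with an additive correction such as $\|g\|_{L^1(\mathbb{T}^n)}$ or under a mean-zero normalization. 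Your remark that the left-hand side is finite for $f\in C_0^{\infty}(\mathbb{T}^n)$ does not address this. Any rigorous torus version of Theorem \ref{1t3} --- including the transplantation the paper itself relies on, since Lemma 4.1 of \cite{02} is proved on $\mathbb{R}^n$ where constants are excluded automatically --- must handle this normalization, for instance by verifying that the operators $T$ to which the theorem is later applied cannot produce nonzero constants, or by carrying the correction term and absorbing it at the end.
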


\section{Mains results}

We begin by proving the regularity  of periodic Fourier integral operator in the weighted Lebesgue space $L^{p_0}_w(\mathbb{T}^{n})$, where the weight function is locally integrable and positive. To this purpose let's set up the following lemma.

\begin{lm}\label{lr}
	For all multi-indices $\alpha\in\mathbb{N}^{n} $ and $a(x, \xi)\in C^{\infty}(\mathbb{T}^{n}\times\mathbb{Z}^{n})$ we have\\
	\begin{align*}
		\sum_{\xi\in\mathbb{Z}^{n}}e^{2\pi i (x-y)\cdot\xi}a(x, \xi)=(-1)^{|\alpha|}(e^{2\pi i (x-y)}-1)^{-\alpha}\sum_{\xi\in\mathbb{Z}^{n}}\left(\bar{\triangle}_{\xi}^{\alpha} e^{2\pi i (x-y)\cdot\xi}\right) a(x, \xi).
	\end{align*}
\end{lm}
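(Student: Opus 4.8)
The plan is to prove the identity summand by summand. The key observation is that the exponential $e^{2\pi i(x-y)\cdot\xi}$ is an eigenfunction of each backward difference operator $\bar{\triangle}_{\xi_j}$, so that applying $\bar{\triangle}_\xi^\alpha$ merely multiplies it by a scalar factor that does not depend on $\xi$. That factor will then cancel exactly against the prefactor $(-1)^{|\alpha|}(e^{2\pi i(x-y)}-1)^{-\alpha}$ on the right-hand side, and no rearrangement or convergence argument is required for the equality itself.

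Concretely, I would first compute, for a single index $j$ from the definition \eqref{eqn6},
$$\bar{\triangle}_{\xi_j} e^{2\pi i(x-y)\cdot\xi} = e^{2\pi i(x-y)\cdot\xi} - e^{2\pi i(x-y)\cdot(\xi+\delta_j)} = (1 - e^{2\pi i(x_j-y_j)})\, e^{2\pi i(x-y)\cdot\xi},$$
using that $\delta_j$ is the $j$-th canonical vector. Iterating over the multi-index through the factorization \eqref{x}, and using that the one-dimensional operators commute while each contributes its $\xi$-independent eigenvalue, I obtain
$$\bar{\triangle}_\xi^\alpha e^{2\pi i(x-y)\cdot\xi} = \prod_{j=1}^n (1 - e^{2\pi i(x_j-y_j)})^{\alpha_j}\, e^{2\pi i(x-y)\cdot\xi} = (-1)^{|\alpha|}(e^{2\pi i(x-y)}-1)^\alpha\, e^{2\pi i(x-y)\cdot\xi},$$
where $(e^{2\pi i(x-y)}-1)^\alpha := \prod_{j=1}^n (e^{2\pi i(x_j-y_j)}-1)^{\alpha_j}$ and the sign $(-1)^{|\alpha|}$ is pulled out of the $n$ factors. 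Substituting this into the right-hand side of the claimed identity, the two occurrences of $(-1)^{|\alpha|}$ multiply to $1$ and $(e^{2\pi i(x-y)}-1)^\alpha$ cancels $(e^{2\pi i(x-y)}-1)^{-\alpha}$, so each term collapses to $e^{2\pi i(x-y)\cdot\xi}a(x,\xi)$, which recovers the left-hand side.

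The only point that genuinely requires attention is that the factor $(e^{2\pi i(x-y)}-1)^{-\alpha}$ be well defined: this forces $e^{2\pi i(x_j-y_j)}\neq 1$ for every $j$ with $\alpha_j>0$, so the identity is to be read off the diagonal $x\equiv y \pmod{\mathbb{Z}^n}$, which is exactly the region needed for the off-diagonal kernel estimates that follow. I expect the true value of the lemma to lie in its subsequent use rather than in its proof: once the kernel is written in this form, applying the summation-by-parts formula of Lemma \ref{lm} with $\varphi(\xi)=e^{2\pi i(x-y)\cdot\xi}$ and $\psi(\xi)=a(x,\xi)$ transfers the difference onto the symbol and yields $(e^{2\pi i(x-y)}-1)^{-\alpha}\sum_\xi e^{2\pi i(x-y)\cdot\xi}\,\triangle_\xi^\alpha a(x,\xi)$, a form to which the H\"ormander estimates of class $S^m_{\rho,\delta}$ apply directly. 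The one delicate step there is that Lemma \ref{lm} needs absolute convergence, which is secured by taking $|\alpha|$ large enough so that the symbol bound $\langle\xi\rangle^{m-\rho|\alpha|+\delta|\beta|}$ is summable; the sign bookkeeping in that transfer is where care is warranted.
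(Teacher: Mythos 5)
Your proof is correct and follows essentially the same route as the paper's: you show that $e^{2\pi i(x-y)\cdot\xi}$ is an eigenfunction of each $\bar{\triangle}_{\xi_j}$ with a $\xi$-independent eigenvalue, iterate over the multi-index via \eqref{x}, and cancel the resulting factor against the prefactor $(-1)^{|\alpha|}(e^{2\pi i(x-y)}-1)^{-\alpha}$, exactly as in the paper. If anything, your write-up is more careful on two points the paper glosses over: the coordinatewise product meaning of $(e^{2\pi i(x-y)}-1)^{\alpha}$, and the fact that the identity only makes sense off the diagonal $x\equiv y \pmod{\mathbb{Z}^n}$ (so that the inverse power is defined).
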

\begin{proof}
	By using the identity (\ref{eqn6})  we obtain:
	\begin{align*}
		\bar{\Delta}^{\alpha_1}_{\xi_1}e^{2\pi i (x-y)\cdot\xi}&=e^{2\pi i (x-y)\cdot\xi}-e^{2\pi i (x-y)\cdot(\xi+\delta_1)}\\
		&=e^{2\pi i (x-y)\cdot\xi}-e^{2\pi i (x-y)\cdot\xi}\cdot e^{2\pi i(x-y)\cdot\delta_1}\\
		&=-e^{2\pi i (x-y)\cdot\xi}\left( e^{2\pi i (x-y)\cdot\delta_1}-1\right).
	\end{align*}  
	\begin{align*}
		-\bar{\Delta}^{\alpha_1}_{\xi_1}e^{2\pi i (x-y)\cdot\xi}
		&=e^{2\pi i (x-y)\cdot\xi}\left( e^{2\pi i (x-y)\cdot\delta_1}-1\right)\\
		&= e^{2\pi i (x-y)\cdot\xi}\left( e^{2\pi i (x-y)}-1\right).
	\end{align*} 
Now the identity (\ref{x}) with $\alpha=(\alpha_1\cdots\alpha_n)$ and  $\xi=(\xi_1\cdots\xi_n)$ gives
	\begin{align*}
		&(-1)^{|\alpha|}\bar{\Delta}^{\alpha}_{\xi}e^{2\pi i(x-y)\cdot\xi}=\left( e^{2\pi i (x-y)}-1\right)^{\alpha}\cdot e^{2\pi i(x-y)\cdot\xi}.
	\end{align*}
	Thus
	\begin{align*}
		e^{2\pi i(x-y)\cdot\xi}=(-1)^{|\alpha|}\left( e^{2\pi i (x-y)}-1\right)^{-\alpha}\bar{\Delta}^{\alpha}_{\xi}e^{2\pi i(x-y)\cdot\xi}.
	\end{align*}
	This yields the result:
	\begin{align*}
		\sum_{\xi \in \mathbb{Z}^n} e^{2\pi i(x-y)\cdot\xi}a(x, \xi)=(-1)^{|\alpha|}\left( e^{2\pi i (x-y)}-1\right)^{-\alpha}\sum_{\xi \in \mathbb{Z}^n}\left( \bar{\Delta}^{\alpha}_{\xi}e^{2\pi i(x-y)\cdot\xi}\right) a(x, \xi).
	\end{align*}
\end{proof}

\begin{theo}\label{t2}
Let $1< p_0<\infty$ and $w\in A_{p_0}$. Let $A:C^{\infty}(\mathbb{T}^{n})\rightarrow C^{\infty}(\mathbb{T}^{n})$ be the periodic Fourier integral operator defined by 
\begin{align*}\label{01}
		Af(x)=\sum_{\xi\in\mathbb{Z}^{n}}e^{2\pi i\phi(x, \xi)}a(x, \xi)(\mathcal{F}_{\mathbb{T}^{n}}f)(\xi),\quad \forall f\in C^{\infty}(\mathbb{T}^{n}),
\end{align*} 
where $\phi(x, \xi):\mathbb{T}^{n}\times\mathbb{Z}^{n}\rightarrow\mathbb{R}$ is a phase function such that  $x \mapsto e^{2\pi i\phi(x, \xi)}$ is 1-perodic for all $\xi\in\mathbb{Z}^{n}$ and $a(x, \xi): \mathbb{T}^{n} \times \mathbb{Z}^{n} \rightarrow \mathbb{C}$ is a symbol satisfying the H\"{o}rmander condition
\begin{align*}
		|\partial_{x}^{\beta}\triangle^{\alpha}_{\xi}a(x, \xi)|\leq C_{\alpha, \beta}\langle \xi  \rangle^{m-\rho|\alpha|+\delta|\beta|},
\end{align*}
with $m\leq(\rho-1)|\frac{1}{p_0}-\frac{1}{2}|-\epsilon$ and $\alpha, \beta \in\mathbb{N}^{n}$; $\epsilon>\delta$.
Then the periodic Fourier integral operator $A$ is bounded on $L^{p_0}_w(\mathbb{T}^{n}).$
	
\end{theo}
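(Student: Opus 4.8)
The plan is to represent $A$ as an integral operator, prove pointwise decay estimates for its kernel, and then run a weighted Fefferman--Stein (sharp maximal function) argument suited to $w\in A_{p_0}$. By Lemma~\ref{l4} it suffices to prove $\|Af\|_{L^{p_0}_w(\mathbb{T}^n)}\le C\|f\|_{L^{p_0}_w(\mathbb{T}^n)}$ for $f\in C^\infty_0(\mathbb{T}^n)$ and then pass to the limit by density. Substituting the definition of $\mathcal{F}_{\mathbb{T}^n}f$ and interchanging the sum over $\xi$ with the integral, legitimate for $f\in C^\infty_0$, I would write
\begin{align*}
Af(x)=\int_{\mathbb{T}^n}K(x,y)\,f(y)\,dy,\qquad K(x,y)=\sum_{\xi\in\mathbb{Z}^n}e^{2\pi i(\phi(x,\xi)-y\cdot\xi)}\,a(x,\xi).
\end{align*}

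Next I would establish pointwise decay for $K$ and for its first-order $x$- and $y$-derivatives, via summation by parts in $\xi$ in the spirit of Lemma~\ref{lr}. Letting the backward differences $\bar{\triangle}^\alpha_\xi$ fall on the oscillatory exponential produces a factor $(e^{2\pi i(x-y)}-1)^{-\alpha}$, comparable near the diagonal to $|x-y|^{-|\alpha|}$, while transferring those differences onto the amplitude $e^{2\pi i\phi(x,\xi)}a(x,\xi)$ costs at most $\langle\xi\rangle^{m-\rho|\alpha|+\delta|\beta|}$. The homogeneity of degree $1$ of $\phi$ is essential here: its $\xi$-derivatives are homogeneous of degree $0$, hence bounded, so the differences of $e^{2\pi i\phi(x,\xi)}$ remain of order $0$ and the H\"ormander estimate of the symbol governs the whole amplitude. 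Choosing $|\alpha|$ large enough that $m-\rho|\alpha|+\delta|\beta|<-n$, the series over $\xi$ converges absolutely and yields the derivative bounds \eqref{a} demanded by Theorem~\ref{sk}. Since under the hypothesis $m\le(\rho-1)|\frac{1}{p_0}-\frac{1}{2}|-\epsilon$ one always has $m\le-\epsilon<0$ (because $\rho\le1$), the operator $A$ is $L^2$-bounded, and combined with the kernel estimates a Calder\'on--Zygmund decomposition shows that $A$ is of weak type $(1,1)$.

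With the hypotheses of Theorem~\ref{sk} verified, it gives, for some fixed $0<s<1$, the pointwise inequality $M_s^\#(Af)(x)\le C_s\,Mf(x)$ for all $f\in C^\infty_0(\mathbb{T}^n)$. Because $w\in A_{p_0}$ is in particular an $A_\infty$ weight and $Af$ is bounded (hence in $L^{p_0}_w$) for such $f$, the weighted Fefferman--Stein inequality in the form $\|g\|_{L^{p_0}_w}\le C\|M_s^\#(g)\|_{L^{p_0}_w}$ applies to $g=Af$, while $w\in A_{p_0}$ also makes the maximal operator $M$ bounded on $L^{p_0}_w(\mathbb{T}^n)$. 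Chaining these,
\begin{align*}
\|Af\|_{L^{p_0}_w(\mathbb{T}^n)}\le C\,\|M_s^\#(Af)\|_{L^{p_0}_w(\mathbb{T}^n)}\le C\,\|Mf\|_{L^{p_0}_w(\mathbb{T}^n)}\le C\,\|f\|_{L^{p_0}_w(\mathbb{T}^n)},
\end{align*}
and the claim follows for $f\in C^\infty_0$, then for all $f\in L^{p_0}_w(\mathbb{T}^n)$ by density.

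The main obstacle is the kernel analysis of the second step: carrying out the summation by parts in the presence of the genuine phase $\phi$ (rather than the plain $x\cdot\xi$ of Lemma~\ref{lr}) and extracting from the resulting $\xi$-series both the pointwise bounds \eqref{a} and the weak type $(1,1)$ property. Everything downstream --- Theorem~\ref{sk}, the Fefferman--Stein inequality, and the boundedness of $M$ on $L^{p_0}_w$ --- is then standard, the role of the order condition being solely to keep the relevant sums convergent with enough margin to absorb the loss $\delta|\beta|$ produced by the $x$-derivatives (whence the requirement $\epsilon>\delta$).
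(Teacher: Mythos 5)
Your overall strategy (kernel bounds plus Theorem \ref{sk}, Fefferman--Stein and the maximal function) is essentially the route the paper reserves for Theorem \ref{t5}, and there it only works under the much stronger hypotheses $m<-(n+1)$ and $|\partial_x^{\alpha}\phi(x,\xi)|\leq C$. For Theorem \ref{t2} your argument has a genuine gap at its central step: the claim that summation by parts transfers the differences onto $e^{2\pi i\phi(x,\xi)}a(x,\xi)$ ``at a cost of at most $\langle\xi\rangle^{m-\rho|\alpha|+\delta|\beta|}$.'' This is false when the phase is genuinely nonlinear in $\xi$. By the discrete Leibniz rule, $\bar{\triangle}^{\alpha}_{\xi}\left[e^{2\pi i\phi(x,\xi)}a(x,\xi)\right]$ contains terms in which all differences fall on the exponential; since $\nabla_{\xi}\phi$ is homogeneous of degree $0$, each such difference is only $O(1)$ (bounded, not small), so the worst term is bounded merely by $\langle\xi\rangle^{m}$, with no gain of $\langle\xi\rangle^{-\rho|\alpha|}$ whatsoever. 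Consequently your $\xi$-series for $K$ converges absolutely only when $m<-n$, and the first-derivative bounds \eqref{a} (which cost an extra power of $\langle\xi\rangle$ from $\partial_y e^{-2\pi iy\cdot\xi}$) need $m<-n-1$. But Theorem \ref{t2} allows $m\leq(\rho-1)\left|\frac{1}{p_0}-\frac{1}{2}\right|-\epsilon$, which for $\rho$ close to $1$ is arbitrarily close to $0$, so the kernel analysis you propose cannot be carried out in the stated range. The same objection hits your weak $(1,1)$ claim: the Seeger-type result the paper invokes requires $m<-(n+1)$, weak $(1,1)$ fails for Fourier integral operators of order near $0$, and the asserted $L^2$-boundedness does not follow from $m<0$ alone for an FIO (it needs non-degeneracy assumptions on the phase), so ``Calder\'on--Zygmund decomposition'' has no $L^2$ input to start from.

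The paper avoids this obstruction by never estimating the FIO kernel directly. It expands the symbol in a Fourier series in $x$, $a(x,\xi)=\sum_{\eta}e^{2\pi ix\cdot\eta}\hat{a}(\eta,\xi)$, and (using the decomposition $\phi(x,\xi)=x\cdot\xi+\psi(\xi)$) reduces $A$ to a sum over $\eta$ of Fourier multipliers $\hat{a}(\eta,D_x)$ composed with the unimodular multiplier $e^{2\pi i\psi(D_x)}$. For each fixed $\eta$ the phase is now \emph{linear}, so Lemma \ref{lr} applies and the transferred differences fall on $\hat{a}(\eta,\xi)$, which genuinely decays: $|\triangle^{\alpha}_{\xi}\hat{a}(\eta,\xi)|\leq C_{r,\alpha}\langle\eta\rangle^{-r}\langle\xi\rangle^{m-\rho|\alpha|+r\delta}$ (Ruzhansky--Turunen). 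The factor $\langle\xi\rangle^{-\rho|\alpha|}$ makes the $\xi$-sum converge for large $|\alpha|$ even when $m$ is near $0$, and the factor $\langle\eta\rangle^{-r}$ makes the sum over $\eta$ converge, yielding the $L^{p_0}_w$ bound by a weighted Young inequality rather than by sharp maximal function machinery. If you want to salvage your approach, you would have to either restrict to $m<-(n+1)$ (i.e., prove Theorem \ref{t5}, not Theorem \ref{t2}) or replace the crude summation by parts with a dyadic decomposition in $\xi$ and stationary-phase analysis adapted to the nonlinear phase, which is a substantially different and harder argument.
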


\begin{proof} 
	
The symbol $a(x, \xi) \in S^{m}_{\rho, \delta}(\mathbb{T}^{n} \times \mathbb{Z}^{n})$ is continuous and thus admits a Fourier series expansion  given by $$a(x, \xi)=\sum_{\eta\in\mathbb{Z}^{n}}e^{2\pi ix\cdot\eta} \hat{a}(\eta, \xi)~~\text{for all}~\eta \in \mathbb{Z}^{n}.$$
 Let us assume that $f\in C^{\infty}_{0}(\mathbb{T}^{n})$. The decomposition of the phase function $\phi(x, \xi)=x\cdot\xi+\psi(\xi)$, where $\psi(\xi)$ is a real values function belonging to $C^{\infty}(\mathbb{R}^{n}\backslash\left\lbrace 0\right\rbrace )$ and is positively homogeneous of degree $1$ in $\xi\neq 0$ gives
\begin{align}
	Af(x)&=\sum_{\xi\in\mathbb{Z}^{n}}e^{2\pi i\phi(x, \xi)}a(x, \xi)(\mathcal{F}_{\mathbb{T}^{n}}f)(\xi)\nonumber\\
	&=\sum_{\xi\in\mathbb{Z}^{n}}e^{2\pi i(x\cdot\xi+\psi(\xi))}a(x, \xi)(\mathcal{F}_{\mathbb{T}^{n}}f)(\xi)\nonumber\\
	&=\sum_{\xi\in\mathbb{Z}^{n}}\sum_{\eta\in\mathbb{Z}^{n}}e^{2\pi i(x\cdot\xi+\psi(\xi))}e^{2\pi ix\cdot\eta} \hat{a}(\eta, \xi)(\mathcal{F}_{\mathbb{T}^{n}}f)(\xi)\nonumber\\
	&=\sum_{\eta\in\mathbb{Z}^{n}}e^{2\pi ix\cdot\eta}\left( \sum_{\xi\in\mathbb{Z}^{n}}e^{2\pi i x\cdot\xi} \hat{a}(\eta, \xi)(\mathcal{F}_{\mathbb{T}^{n}}f)(\xi)e^{2\pi i\psi(\xi)}\right). \nonumber
\end{align}
One can see the expression 
\begin{align}
	 \sum_{\xi\in\mathbb{Z}^{n}}e^{2\pi i x\cdot\xi} \hat{a}(\eta, \xi)(\mathcal{F}_{\mathbb{T}^{n}}f)(\xi)e^{2\pi i\psi(\xi)}
\nonumber
\end{align}
as the symbol of the product of the two operators $\hat{a}(\eta, D_x)$ and $e^{2\pi i\psi(D_x)}.$ Namely
	\begin{align*}
	\left( \hat{a}(\eta, D_x)e^{2\pi i\psi(D_x)}\right)f(x) 
		=\sum_{\xi\in\mathbb{Z}^{n}}e^{2\pi ix\cdot\xi}  \hat{a}(\eta, \xi)e^{2\pi i\psi(\xi)}(\mathcal{F}_{\mathbb{T}^{n}}f)(\xi) .
	\end{align*}
	It follows that 
	$$Af(x)=\sum_{\eta\in\mathbb{Z}^{n}}e^{2\pi ix\cdot\eta}  \hat{a}(\eta, D_x)f(x)e^{2\pi i\psi(D_x)} ,
	$$  
where $$\hat{a}(\eta, D_x)f(x)=\sum_{\xi\in\mathbb{Z}^{n}}e^{2\pi ix\cdot\xi}\hat{a}(\eta, \xi)(\mathcal{F}_{\mathbb{T}^{n}}f)(\xi)$$ is the Fourier multiplier. \\
We now estimate $Af$ with respect to $\hat{a}(\eta, D_x)f$.

\begin{eqnarray}\label{003}
\left\| Af\right\|_{L^{p_0}_w(\mathbb{T}^{n})}&\leq &\left\lbrace \int_{\mathbb{T}^{n}}\sum_{\eta\in\mathbb{Z}^{n}}\left|e^{2\pi i x\cdot\xi} \hat{a}(\eta, D_x)f(x)e^{2\pi i\psi(D_x)}\right|^{p_0}w(x)dx\right\rbrace ^\frac{1}{p_{0}}\nonumber\\
&\leq &\sum_{\eta\in\mathbb{Z}^{n}}\left\lbrace\int_{\mathbb{T}^{n}}\left| \hat{a}(\eta, D_x)f(x)\right|^{p_0}w(x)dx\right\rbrace^\frac{1}{p_{0}} \nonumber\\
&\leq &\sum_{\eta\in\mathbb{Z}^{n}}\left\|  \hat{a}(\eta, D_x)f(x)\right\| _{L^{p_0}_w(\mathbb{T}^{n})}.
\end{eqnarray}
The next step is to estimate the norm $\left\|  \hat{a}(\eta, D_x)f\right\| _{L^{p_0}_w(\mathbb{T}^{n})}$. 
\begin{align}
\left\|  \hat{a}(\eta, D_x)f\right\| _{L^{p_0}_w(\mathbb{T}^{n})}\nonumber
&=\left\lbrace \int_{\mathbb{T}^{n}}\left| \hat{a}(\eta, D_x)f(x)\right| ^{p_0}w(x)dx\right\rbrace ^{\frac{1}{p_0}}\\ \nonumber
&\leq\left\lbrace \int_{\mathbb{T}^{n}}\left| \sum_{\xi\in\mathbb{Z}^{n}}e^{2\pi i x\cdot\xi}\hat{a}(\eta, \xi)\hat{f}(\xi)\right|^{p_0}w(x)dx\right\rbrace ^{\frac{1}{p_0}}\\\nonumber
&\leq\left\lbrace \int_{\mathbb{T}^{n}}\left| \sum_{\xi\in\mathbb{Z}^{n}}\int_{\mathbb{T}^{n}}e^{2\pi i (x-y)\cdot\xi}\hat{a}(\eta, \xi)f(y)dy\right|^{p_0}w(x)dx\right\rbrace ^{\frac{1}{p_0}}\\ \nonumber
&=\left\lbrace \int_{\mathbb{T}^{n}}\left|\int_{\mathbb{T}^{n}} \sum_{\xi\in\mathbb{Z}^{n}}e^{2\pi i (x-y)\cdot\xi}\hat{a}(\eta, \xi)f(y)dy\right|^{p_0}w(x)dx\right\rbrace ^{\frac{1}{p_0}}.\\ \nonumber
\end{align}

We then apply Lemma \ref{lr} to deduce:
\begin{eqnarray*}
&{}& \left\|  \hat{a}(\eta, D_x)f\right\| _{L^{p_0}_w(\mathbb{T}^{n})} \\ 
&\leq &\left\lbrace \int_{\mathbb{T}^{n}}\left| \int_{\mathbb{T}^{n}} \sum_{\xi\in\mathbb{Z}^{n}}\left[ \l(-1)^{|\alpha|}(e^{2\pi i(y-x)}-1)^{-\alpha}\bar{\triangle}^{\alpha}_\xi e^{2\pi i (x-y)\cdot\xi}{a}(\eta, \xi)\right] f(y)dy\right|^{p_0}w(x)dx\right\rbrace ^{\frac{1}{p_0}}.
\end{eqnarray*}
By Lemma \ref{lm}, and the second-order multidimensional Taylor expansion around $(0, 0)$ to the function $e^{2\pi i (x-y)}$:

\begin{align*}
e^{2 \pi i(x-y)}&=1+\nabla e^{0}\cdot 2\pi i(x-y)+o(\left\| x-y\right\|^2)\\
&=1+ 2\pi i(x-y) + o(\left\| x-y\right\|^2),
\end{align*}
where $\nabla e^{0}=(1, -1)$ is the gradient of the function $e^{2\pi i(x-y)}$ at the point $(0, 0)$, we obtain

\begin{eqnarray*}
&{}&\left\|  \hat{a}(\eta, D_x)f\right\| _{L^{p_0}_w(\mathbb{T}^{n})}\\
&\leq &\left\lbrace \int_{\mathbb{T}^{n}}\left|\int_{\mathbb{T}^{n}} \sum_{\xi\in\mathbb{Z}^{n}}\left[ (2\pi i)^{-|\alpha|}\left(   x-y \right)  ^{-\alpha} e^{2\pi i (x-y)\cdot\xi}\triangle ^{\alpha}_\xi\hat{a}(\eta, \xi)\right] f(y)dy\right|^{p_0}w(x)dx\right\rbrace ^{\frac{1}{p_0}}\\ 
&\leq &\left\lbrace \int_{\mathbb{T}^{n}}\left|  \sum_{\xi\in\mathbb{Z}^{n}}\int_{\mathbb{T}^{n}}\left| (2\pi i)^{-|\alpha|}\left(   x-y \right)  ^{-\alpha}f(y)\right| dy |\triangle^{\alpha}_\xi\hat{a}(\eta, \xi)| \right|^{p_0} w(x)dx\right\rbrace ^{\frac{1}{p_0}}\\ 
&\leq & (2\pi)^{-|\alpha|}\left\lbrace \int_{\mathbb{T}^{n}}\left| \sum_{\xi\in\mathbb{Z}^{n}}\left| \triangle^{\alpha}_\xi\hat{a}(\eta, \xi)\right| \int_{\mathbb{T}^{n}}|\left(  x-y \right) ^{-\alpha}f(y)|dy\right|^{p_0} w(x)dx\right\rbrace ^{\frac{1}{p_0}} \\
&\leq & (2\pi)^{-|\alpha|}\left\lbrace \int_{\mathbb{T}^{n}}\left|  \sum_{\xi\in\mathbb{Z}^{n}}\left| \triangle^{\alpha}_\xi\hat{a}(\eta, \xi)\right| \left( |\left(   x-\cdot \right)  ^{-\alpha}|\star |f(\cdot)|\right) (x) \right| ^{p_0}  w(x)dx\right\rbrace ^{\frac{1}{p_0}} \\
&\leq & (2\pi)^{-|\alpha|} \left( \sum_{\xi\in\mathbb{Z}^{n}}\left| \triangle^{\alpha}_\xi\hat{a}(\eta, \xi)\right|^{p_0}\right) ^{\frac{1}{p_0}} \left\lbrace \int_{\mathbb{T}^{n}}\left|   |\left(x-\cdot \right)  ^{-\alpha}|\star |f(\cdot)|(x)\right|^{p_0} w(x)dx\right\rbrace ^{\frac{1}{p_0}} .
\end{eqnarray*}

Note that the convolution norm on the weighted spaces gives
\begin{align*}
	\left\lbrace \int_{\mathbb{T}^{n}}\left|\left|  (x-\cdot)  ^{-\alpha}\right|\star |f(\cdot)|(x)\right|^{p_0} w(x)dx\right\rbrace ^{\frac{1}{p_0}}
	&\leq \left\| \left| \cdot \right| ^{-|\alpha|}\right\| _{L^{1}_w(\mathbb{T}^{n})}\left\| f\right\|_{L^{p_0}_w(\mathbb{T}^{n})}\\
	&\leq C_{\alpha}\left\| f\right\|_{L^{p_0}_w(\mathbb{T}^{n})}.
\end{align*} 

Let's use the estimate 
\begin{align}
\left| \triangle^{\alpha}_\xi\hat{a}(\eta, \xi)\right|\leq C_{r, \alpha}\langle \eta \rangle^{-r}\langle\xi \rangle^{m-\rho|\alpha|+r\delta}, \forall r\in\mathbb{N}_{0}
\end{align}
established in \cite{RT}, Lemma 4.2.1. We obtain
\begin{align*}
\left\| \hat{a}(\eta, D_x)f\right\| _{L^{p_0}_w(\mathbb{T}^{n})}&\leq (2\pi)^{-|\alpha|}C_{\alpha} \left( \sum_{\xi\in\mathbb{Z}^{n}}C_{r, \alpha}\langle \eta \rangle^{-rp_0}\langle\xi \rangle^{(m-\rho|\alpha|+r\delta)p_0}\right)^{\frac{1}{p_0}} \left\| f\right\|_{L^{p_0}_w(\mathbb{T}^{n})} \nonumber\\
&\leq(2\pi)^{-|\alpha|}C_{r, \alpha}\langle \eta \rangle^{-r} \left( \sum_{\xi\in\mathbb{Z}^{n}}\langle\xi \rangle^{(m-\rho|\alpha|+r\delta)p_0}\right) ^{\frac{1}{p_0}} \left\| f\right\|_{L^{p_0}_w(\mathbb{T}^{n})}.
\end{align*}
The series converge for $1<r\leq [\frac{\epsilon}{\delta}]+1$.
So, there exists a constant $C_{p_0,r, \alpha}>0$ such that
\begin{align*}
	\left\| \hat{a}(\eta, D_x)f\right\| _{L^{p_0}_w(\mathbb{T}^{n})}&\leq (2\pi)^{-|\alpha|}C_{r, \alpha}\langle \eta \rangle^{-r}C_{p_0,r, \alpha}\left\| f\right\|_{L^{p_0}_w(\mathbb{T}^{n})}\\
	&\leq C'_{p_0,r, \alpha} \langle \eta \rangle^{-r}\left\| f\right\|_{L^{p_0}_w(\mathbb{T}^{n})}.
\end{align*}
We are now ready to formulate the boundedness of the operator $A.$ If we go back to the estimate (\ref{003}) we can write  
\begin{align*}
\left\| Af\right\|_{L^{p_0}_w(\mathbb{T}^{n})}&\leq\sum_{\eta\in\mathbb{Z}^{n}}\left\|  \hat{a}(\eta, D_x)f(x)\right\| _{L^{p_0}_w(\mathbb{T}^{n})}\\&
\leq\sum_{\eta\in\mathbb{Z}^{n}} C'_{p_0, r, \alpha}\langle \eta \rangle^{-r} \left\| f\right\|_{L^{p_0}_w(\mathbb{T}^{n})}\\\nonumber
&\leq \left( C'_{p_0, r, \alpha}\sum_{\eta\in\mathbb{Z}^{n}}\langle \eta \rangle^{-r} \right)  \left\| f\right\|_{L^{p_0}_w(\mathbb{T}^{n})}.
\end{align*}
Since $r>1$, the sum $\sum_{\eta\in\mathbb{Z}^{n}}\langle \eta \rangle^{-r} $ is finite and there exists a constant $C''_{p_0, r, \alpha}>0$ such that
$$\left\| Af\right\|_{L^{p_0}_w(\mathbb{T}^{n})}\leq C''_{p_0, r, \alpha}\left\|f\right\|_{L^{p_0}_w(\mathbb{T}^{n})}.$$
\end{proof}

Now, we present a sufficient condition for the boundedness of periodic Fourier integral operators in generalized Lebesgue spaces $L^{p(\cdot)}(\mathbb{T}^{n})$, using Rubio de Francia extrapolation theorem on the torus.
\begin{theo} \label{t3}
Let  $\phi(x, \xi)\in C^{\infty}(\mathbb{T}^{n}\times\mathbb{Z}^{n}  )$  be a  phase function such that $x\mapsto e^{2\pi i\phi(x, \xi)}$ is 1-periodic for all $\xi\in\mathbb{Z}^{n}$ and  let  $a(x, \xi)\in C^{\infty}(\mathbb{T}^{n}\times\mathbb{Z}^{n})$ be a symbol which satisfies
 \begin{equation*}
 |\triangle^{\alpha}_{\xi}\partial_{x}^{\beta}a(x, \xi)|\leq C_{\alpha, \beta}\langle \xi \rangle^{m-\rho|\alpha|} \quad \forall \alpha, \beta\in\mathbb{N}^{n}.
 \end{equation*}
For all $p(\cdot)\in \mathcal{P}(\mathbb{T}^{n})$ such that  $1<p_-\leq p(\cdot)\leq p_+<\infty$,
 there exists a constant $C>0$ such that
 the periodic Fourier integral operator $A$ associated to the symbol $a(x, \xi)$ satisfies 
	\begin{align*}
	\left\|  Af\right\| _{L^{p(\cdot)}(\mathbb{T}^{n})} \leq C\left\|  f \right\| _{L^{p(\cdot)}(\mathbb{T}^{n})}.
	\end{align*}	
\end{theo}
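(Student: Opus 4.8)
**The plan is to apply the Rubio de Francia extrapolation theorem (Theorem \ref{t1}) to transfer the weighted $L^{p_0}$ bound of Theorem \ref{t2} to the variable-exponent space $L^{p(\cdot)}(\mathbb{T}^{n})$.** The setup of Theorem \ref{t1} requires a family $\mathcal{F}$ of pairs of non-negative measurable functions and a uniform weighted estimate over all $w \in A_1$ with a fixed exponent $p_0 > 1$. The natural choice is to take $\mathcal{F}$ to be the collection of pairs $(|Af|, |f|)$ as $f$ ranges over $C^{\infty}_0(\mathbb{T}^{n})$; the density of $C^{\infty}_0(\mathbb{T}^{n})$ in $L^{p(\cdot)}(\mathbb{T}^{n})$ (Lemma \ref{l4}, valid since $p_+ < \infty$) will let me extend the final inequality to all of $L^{p(\cdot)}$.

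The first step is to fix a suitable $p_0$. Given $p(\cdot)$ with $1 < p_- \leq p_+ < \infty$, I would choose $p_0$ with $1 < p_0 \leq p_-$, so that the hypothesis $p_0 \leq p_- \leq p_+ < \infty$ of Theorem \ref{t1} holds. I must also arrange that the maximal operator $M$ is bounded on $L^{(p(\cdot)/p_0)'}(\mathbb{T}^{n})$: since $p_- > 1$, choosing $p_0$ close enough to $1$ makes $(p(\cdot)/p_0)_- > 1$, and the equivalence in Theorem \ref{mx} (together with the $\log$-Hölder type regularity implicit in the boundedness of $M$ on such spaces) guarantees this boundedness. The second step is to verify the weighted hypothesis of Theorem \ref{t1}: for this fixed $p_0$ and every $w \in A_1$, I need
\begin{align*}
\int_{\mathbb{T}^{n}} |Af(x)|^{p_0} w(x)\, dx \leq c_{p_0} \int_{\mathbb{T}^{n}} |f(x)|^{p_0} w(x)\, dx .
\end{align*}
This is exactly the conclusion of Theorem \ref{t2}, provided the order condition $m \leq (\rho-1)|\tfrac{1}{p_0}-\tfrac{1}{2}| - \epsilon$ with $\epsilon > \delta$ is met. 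Since the present symbol satisfies $|\triangle^{\alpha}_{\xi}\partial^{\beta}_x a| \leq C_{\alpha,\beta}\langle\xi\rangle^{m-\rho|\alpha|}$, we are in the case $\delta = 0$, so any $\epsilon > 0$ is admissible; I need only check that the given $m$ is non-positive enough to satisfy the inequality for the chosen $p_0$, and note that $A_1 \subset A_{p_0}$ for $p_0 > 1$ so that Theorem \ref{t2} applies to every $w \in A_1$.

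With both hypotheses verified, Theorem \ref{t1} yields directly a constant $C > 0$ with $\|Af\|_{L^{p(\cdot)}(\mathbb{T}^{n})} \leq C\|f\|_{L^{p(\cdot)}(\mathbb{T}^{n})}$ for all $f \in C^{\infty}_0(\mathbb{T}^{n})$, and the density argument completes the proof. \textbf{The main obstacle I anticipate is the compatibility of the order condition with the choice of $p_0$}: Theorem \ref{t2} requires $m \leq (\rho-1)|\tfrac{1}{p_0}-\tfrac{1}{2}| - \epsilon$, and since $\rho \leq 1$ the right-hand side is non-positive and depends on $p_0$; one must confirm that a single $p_0 \in (1, p_-]$ can be chosen which simultaneously makes $M$ bounded on $L^{(p(\cdot)/p_0)'}$ and keeps the order restriction satisfiable for the given $m$. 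A secondary delicate point is justifying that $C^{\infty}_0$-estimates extrapolate and then pass to the closure; this requires knowing $A$ extends continuously, which the density lemma and the uniform constant from extrapolation together provide.
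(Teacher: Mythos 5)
Your proposal follows essentially the same route as the paper's own proof: fix $p_0>1$, use the embedding $A_1\subset A_{p_0}$ so that Theorem \ref{t2} supplies the uniform weighted $L^{p_0}_w$ estimate for every $w\in A_1$, apply the extrapolation Theorem \ref{t1} to the pairs $(|Af|,|f|)$, and conclude by the density of $C^{\infty}_0(\mathbb{T}^{n})$ (Lemma \ref{l4}). You are in fact more careful than the paper, which silently assumes both the order condition $m\leq(\rho-1)\left|\frac{1}{p_0}-\frac{1}{2}\right|-\epsilon$ required by Theorem \ref{t2} and the boundedness of $M$ on $L^{(p(\cdot)/p_0)'}(\mathbb{T}^{n})$; the two obstacles you flag are limitations of the statement and proof as written, not defects of your argument.
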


\medskip

\begin{proof}
	Let $f \in C^{\infty}_{0}(\mathbb{T}^{n})$. Then $f \in L^{p(\cdot)}(\mathbb{T}^{n})$ since $C^{\infty}_{0}(\mathbb{T}^{n})$ is dense in $L^{p(\cdot)}(\mathbb{T}^{n})$ (Lemma \ref{l4}) .\\
	Moreover, if $1 \leq p < q < \infty$,  there is a continuous embedding of Muckenhoupt classes $A_{p} \hookrightarrow A_{q}$ (see Lemma \ref{wgt}). Let $w$ be a  weight function in $A_1$.
	Since $p_0 > 1$, then   $w \in A_{p_0}$.
		By Theorem \ref{t2}, the operator $A$ is bounded in $L^{p_0}_{w}(\mathbb{T}^{n})$.
	Moreover, the maximal operator is bounded on $L^{(p(\cdot)/p_0)'}(\mathbb{T}^{n}) $ and $(|Af|, |f|)$ is a pair of positive functions.
	By Theorem \ref{t1}, the periodic Fourier integral operator $A$ is bounded in $L^{p(\cdot)}(\mathbb{T}^{n})$, and there exists a constant $C> 0$ such that
$$\left\|  Af\right\| _{L^{p(\cdot)}(\mathbb{T}^{n})} \leq C\left\|  f \right\| _{L^{p(\cdot)}(\mathbb{T}^{n})}.
$$
\end{proof}
Hereafter is the result of boundedness for periodic Fourier integral operators in weighted Lebesgue spaces with variable exponent. 

\begin{theo}\label{t5}
	Let $a(x, \xi): \mathbb{T}^{n} \times \mathbb{Z}^{n} \rightarrow \mathbb{C}$ be a symbol which satisfies the condition
	\begin{align*}
	|\partial_{x}^{\beta}\Delta_{\xi}^{\alpha}a(x, \xi)| \leq C_{\alpha, \beta}\langle\xi\rangle^{m - \rho|\alpha| + \delta|\beta|},
	\end{align*}
	where $0 \leq \delta < \rho \leq 1$, the parameter $m <-(n+1) $ for all $\alpha, \beta\in \mathbb{N}^{n}_{0}$.
Let $\phi: \mathbb{T}^{n} \times \mathbb{Z}^{n} \rightarrow \mathbb{R}$ be a phase function such that the function $x\mapsto e^{2\pi i\phi(x, \xi)}$ is 1-periodic   and satisfies the condition: there exist a constant $C>0$ such that
	\begin{align*}
	|\partial^{\alpha}_{x}\phi(x, \xi)| \leq C.
	\end{align*}
Further, suppose $p(\cdot) \in C^{\log}_{loc}(\mathbb{T}^{n})$ such that $1< p_{-}\leq p_{+}<\infty$ and  $p(\cdot)=p_{\infty}$ for $|x|\geq R$ where $R>0$. Let $w \in A_{p(\cdot)}$ be a Muckenhoupt weight function of the form $\displaystyle {w(x) = (1 + |x|)^{\beta}\prod_{k=1}^{n}|x - x_k|^{\beta_k}}$ such that for all $x_k \in \mathbb{T}^{n}$,
	\begin{align}\label{wi}
	-\dfrac{n}{p(x_k)} < \beta_k < \dfrac{n}{p'(x_k)}, and 
\quad	-\dfrac{n}{p_{\infty}} < \beta + \sum_{k=1}^{m}\beta_k < \dfrac{n}{p'_{\infty}}, \quad k = 1, \dots, n.
	\end{align}
Then the periodic Fourier integral operator $A$ associated to the symbol $a$  is  bounded in $L^{p(\cdot)}_{w}(\mathbb{T}^{n})$, and there exists a  constant $C' > 0$ such that  
\begin{align*}
\left\| Af\right\| _{L^{p(\cdot)}_{w}(\mathbb{T}^{n})}\leq C'\left\| f \right\| _{L^{p(\cdot)}_{w}(\mathbb{T}^{n})}.
\end{align*}               
\end{theo}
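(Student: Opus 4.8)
The plan is to realize $A$ as a Calderón--Zygmund-type integral operator on the compact torus, to establish for it the sharp maximal estimate of Theorem \ref{sk}, and then to feed this into the reverse inequality of Theorem \ref{1t3}, reconciling the two (different) sharp maximal operators by means of Proposition \ref{poids}. Throughout I would work with $f\in C^{\infty}_{0}(\mathbb{T}^{n})$ and extend the bound to all of $L^{p(\cdot)}_{w}(\mathbb{T}^{n})$ at the end by density (Lemma \ref{l4}).

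First I would write $A$ in kernel form. Inserting $(\mathcal{F}_{\mathbb{T}^{n}}f)(\xi)=\int_{\mathbb{T}^{n}}e^{-2\pi i y\cdot\xi}f(y)\,dy$ gives $Af(x)=\int_{\mathbb{T}^{n}}K(x,y)f(y)\,dy$ with
\[
K(x,y)=\sum_{\xi\in\mathbb{Z}^{n}}e^{2\pi i(\phi(x,\xi)-y\cdot\xi)}a(x,\xi),
\]
which is of the form required by Theorem \ref{1t3} after the harmless relabelling $z=x-y$. Because $m<-(n+1)<-n$, the estimate $|a(x,\xi)|\le C\langle\xi\rangle^{m}$ makes this series absolutely and uniformly convergent, so $K$ is bounded; as $\mathbb{T}^{n}$ has finite measure, $\sup_{y}\int_{\mathbb{T}^{n}}|K(x,y)|\,dx<\infty$, whence $A$ is bounded on $L^{1}(\mathbb{T}^{n})$ and, in particular, of weak type $(1,1)$. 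For the kernel regularity \eqref{a} I would differentiate once in each variable: using $|\partial_{x}^{\alpha}\phi|\le C$ together with $|\partial_{x}a|\le C\langle\xi\rangle^{m+\delta}$, the $x$-derivative is dominated by $\sum_{\xi}\langle\xi\rangle^{m+\delta}$, convergent since $m+\delta<m+1<-n$, while the $y$-derivative brings down a factor $\xi$ and is dominated by $\sum_{\xi}\langle\xi\rangle^{m+1}<\infty$. If the finer decay encoded in the factors $\|\cdot\|^{n+1}$ is needed, summation by parts (Lemma \ref{lr}, via Lemma \ref{lm}) converts powers of $x-y$ into differences $\Delta_{\xi}^{\gamma}a$, each gaining a factor $\langle\xi\rangle^{-\rho}$ while the differences of the bounded oscillation $e^{2\pi i\phi}$ stay bounded. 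This is exactly where the hypotheses $m<-(n+1)$ and $0\le\delta<\rho$ are consumed.

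With the kernel conditions and weak type $(1,1)$ in hand, Theorem \ref{sk} yields, for each $0<s<1$, the pointwise bound $M_{s}^{\#}(Af)(x)\le C_{s}\,Mf(x)$, where $M_{s}^{\#}(g)=\bigl(M^{\#}(|g|^{s})\bigr)^{1/s}$. I would then fix $s\in(0,1)$ (so automatically $s<p_{-}$) and combine this with Theorem \ref{1t3}. By Proposition \ref{poids},
\[
\|Af\|_{L^{p(\cdot)}_{w}}=\bigl\||Af|^{s}\bigr\|^{1/s}_{L^{p(\cdot)/s}_{w^{s}}}.
\]
The exponent $p(\cdot)/s$ is again locally log-Hölder, has lower bound $p_{-}/s>1$, and equals $p_{\infty}/s$ for $|x|\ge R$, while $w^{s}=(1+|x|)^{s\beta}\prod_{k}|x-x_{k}|^{s\beta_{k}}$ retains the admissible product form and lies in $A_{p(\cdot)/s}$, the constraints for $s\beta_{k}$ relative to $p(\cdot)/s$ following from \eqref{wi} because $s<1$. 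Applying the reverse estimate of Theorem \ref{1t3} in $L^{p(\cdot)/s}_{w^{s}}$ to $|Af|^{s}$ bounds the last quantity by $C^{1/s}\|M^{\#}(|Af|^{s})\|^{1/s}_{L^{p(\cdot)/s}_{w^{s}}}$; since $M^{\#}(|Af|^{s})=(M_{s}^{\#}(Af))^{s}$, a second use of Proposition \ref{poids} returns $\|Af\|_{L^{p(\cdot)}_{w}}\le C\,\|M_{s}^{\#}(Af)\|_{L^{p(\cdot)}_{w}}$. Feeding in the sharp estimate and then the boundedness of $M$ on $L^{p(\cdot)}_{w}$ (valid because $w\in A_{p(\cdot)}$ and $p\in C^{\log}_{\mathrm{loc}}$) gives
\[
\|Af\|_{L^{p(\cdot)}_{w}}\le C\,\|M_{s}^{\#}(Af)\|_{L^{p(\cdot)}_{w}}\le CC_{s}\,\|Mf\|_{L^{p(\cdot)}_{w}}\le C'\,\|f\|_{L^{p(\cdot)}_{w}},
\]
which is the assertion.

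The main obstacle is the step that reconciles the two black boxes: Theorem \ref{sk} naturally controls the fractional sharp function $M_{s}^{\#}$, whereas Theorem \ref{1t3} is phrased with $M^{\#}$, and on the torus the only legitimate bridge between them is to rescale the exponent to $p(\cdot)/s$ and the weight to $w^{s}$ and to verify that both the membership $w^{s}\in A_{p(\cdot)/s}$ and the admissibility inequalities \eqref{wi} survive this rescaling. By contrast the kernel estimates are comparatively routine once $m<-(n+1)$ is assumed, and the weak $(1,1)$ property is immediate from the compactness of $\mathbb{T}^{n}$.
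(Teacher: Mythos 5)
Your proposal is correct and follows essentially the same route as the paper's own proof: realize $A$ as a kernel operator, verify the derivative conditions of Theorem \ref{sk} (using $|\partial_x^{\alpha}\phi|\leq C$, the symbol decay, and the fact that $\|x\|,\|y\|$ are bounded on the torus), establish weak type $(1,1)$, and then combine Theorem \ref{sk} with Theorem \ref{1t3} through the rescaling $p(\cdot)\mapsto p(\cdot)/s$, $w\mapsto w^{s}$ of Proposition \ref{poids}, finishing with the boundedness of $M$ on $L^{p(\cdot)}_{w}(\mathbb{T}^{n})$ and density. The one place you genuinely diverge is the weak $(1,1)$ property: the paper invokes the Seeger--Sogge--Stein result for symbols of order $m<-(n+1)$, whereas you observe that $m<-n$ already makes the kernel uniformly bounded, so $A$ maps $L^{1}(\mathbb{T}^{n})$ into $L^{\infty}(\mathbb{T}^{n})$ and is trivially of weak type $(1,1)$ on the finite-measure torus --- a more elementary and self-contained justification of that step.
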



\begin{proof} To simplify, we use the expression $f\lesssim g$ which means that $f \leq cg$ for some independent	constant $c>0$.\\
Let $f\in C^{\infty}_{0}(\mathbb{T}^{n})$ and  $0<t<1$. We have
 \begin{align*}
\left\| Af\right\| _{L^{p(\cdot)}_{w}(\mathbb{T}^{n})}&=
 \left\| \left| Af\right|^{t}  \right\|^{\frac{1}{t}} _{L^{\frac{p(\cdot)}{t}}_{w^t}(\mathbb{T}^{n})}.
\end{align*}
In what follows, we will decompose the proof in two steps.\\ 
\textbf{First step:} In this step, we establish the estimate
\begin{align*}
\left\|Af \right\|_{L^{p(\cdot)}_{w}(\mathbb{T}^{n})}\lesssim \left\| M^{\#}|Af|^{t}\right\|^{\frac{1}{t}}_{L^{\frac{p(\cdot)}{t} }_{w^{t}}(\mathbb{T}^{n})}.
\end{align*} 

\noindent Let $p(\cdot)\in C^{\log}_{loc}(\mathbb{T}^{n})$. For all $0<t<1$, we have  $\frac{p(\cdot)}{t}\in C^{\log}_{loc}(\mathbb{T}^{n})$. Multiplying inequality (\ref{wi}) by $t$ gives 
\begin{align*}
-\dfrac{n}{\frac{p(x_k)}{t}}<t\beta_k<\dfrac{n}{\frac{p'(x_k)}{t}}\quad and 
\quad	-\dfrac{n}{\frac{p_{\infty}}{t}} < t\beta + \sum_{k=1}^{n}t\beta_k < \dfrac{n}{\frac{p'_{\infty}}{t}}, \quad k=1, ..., n.
\end{align*} 
Furthermore, for all $0<t<1$, we have $tp_-+1-t<p_-$. Thus by Lemma \ref{wgt}, the weight function $w^{t}$ belongs to $A_{p_{-}}.$ Since the space $A_{p_{-}}\subset A_{p(\cdot)}$, then 
$w^{t}\in A_{p(\cdot)}$ .
Now, using Theorem \ref{1t3} and the density of $C^{\infty}_{0}(\mathbb{T}^{n})$  in $L^{p(\cdot)}_{w}(\mathbb{T}^{n})$ (see Lemma \ref{l4}), for all $f\in C^{\infty}_{0}(\mathbb{T}^{n})$, 
\begin{align*}
\left\| Af\right\|_{L^{p(\cdot)}_{w}(\mathbb{T}^{n})}\lesssim & \left\| M^{\#}\left( |Af|^{t}\right) \right\|^{\frac{1}{t}}_{L^{\frac{p(\cdot)}{t} }_{w^{t}}(\mathbb{T}^{n})}.
\end{align*}
\noindent\textbf{Second step:} For the second step we show that
\begin{align*}
\left\| M^{\#}\left( |Af|^{t}\right) \right\|^{\frac{1}{t}}_{L^{\frac{p(\cdot)}{t} }_{w^{t}}(\mathbb{T}^{n})}\lesssim \left\| f \right\|_{L^{p(\cdot)}_{w}(\mathbb{T}^{n})}.
\end{align*}
Let's establish the  conditions of the Theorem \ref{sk}, where $K(x,y)$ is the kernel associated to the  periodic Fourier integral operator $A:$
\begin{align*}
\sup_{|\alpha'|=1}\sup_{x, y\in\mathbb{T}^{n}}\left\| y\right\|^{n+1}\left| \partial_{x}^{\alpha'}K(x, y)\right|<\infty, \\ 
\sup_{|\beta'|=1}\sup_{x, y\in\mathbb{T}^{n}}\left\| x\right\|^{n+1}\left| \partial_{y}^{\beta'}K(x, y)\right|<\infty .
\end{align*}
By using the Leibniz formula as well as the estimates on the symbol $a$ and the phase $\phi$, we obtain 
\begin{align*}
\partial_{x}^{\alpha'}K(x, y)=&\partial_{x}^{\alpha'}\left( \sum_{\xi\in\mathbb{Z}^{n}} e^{2\pi i(\phi(x, \xi)-y\cdot\xi)}a(x, \xi) \right)\\
=& \sum_{\xi\in\mathbb{Z}^{n}}\sum_{|\gamma|\leq|\alpha'|}C_{\alpha, \gamma}(2\pi i)^{|\gamma|}\partial_{x}^{\gamma}\phi(x, \xi)\partial_{x}^{\alpha'-\gamma}a(x, \xi)e^{2\pi i(\phi(x, \xi)-y\cdot\xi)}.
\end{align*}
Consequently
\begin{align*}
\left| \partial_{x}^{\alpha'}K(x, y)\right| \leq& \sum_{\xi\in\mathbb{Z}^{n}}\sum_{|\gamma|\leq|\alpha'|}\left| (2\pi i)^{|\gamma|}\partial_{x}^{\gamma}\phi(x, \xi)\partial_{x}^{\alpha'-\gamma}a(x, \xi)e^{2\pi i(\phi(x, \xi)-y\cdot\xi)} \right| \\
\leq&\sum_{\xi\in\mathbb{Z}^{n}}\sum_{|\gamma|\leq|\alpha'|}C_{\alpha', \gamma}\left| \partial_{x}^{\gamma}\phi(x, \xi) \right| \left|\partial_{x}^{\alpha'-\gamma}a(x, \xi) \right|\\
\leq&\sum_{\xi\in\mathbb{Z}^{n}}\sum_{|\gamma|\leq|\alpha'|}C'_{\alpha', \gamma}C\langle\xi \rangle^{m+\delta|\alpha'-\gamma|}\ <\infty.
\end{align*} Since $m +\delta |\alpha' - \gamma| < -n$, this ensures convergence with respect to $\xi$. Moreover, by multiplying $\left| \partial_{x}^{\alpha'}K(x, y)\right|$ by $\left| y\right|^{n+1}$ and considering the identification of $\mathbb{T}^{n}$ with the cube $[0, 1)^{n}$, such that for all $ y \in \mathbb{T}^{n}$ we have $\left| y\right| \leq 1$, we can conclude that 
$\displaystyle{\sup_{|\alpha'|=1}\sup_{x, y\in\mathbb{T}^{n}}\left\| x\right\|^{n+1}\left| \partial_{y}^{\alpha'}K(x, y)\right|<\infty .}$\\

We now give an estimate of $|\partial_{y}^{\beta'} K(x, y)|$.
	\begin{align*}
\left| 	\partial^{\beta'}_{y}K(x, y)\right| \leq&\sum_{\xi\in\mathbb{Z}^{n}}|(2\pi i)^{|\beta|}||\xi|^{|\beta|}|a(x, \xi)|\\
\leq& \sum_{\xi\in\mathbb{Z}^{n}}C'\langle\xi \rangle^{m+|\beta'|}.
	\end{align*} 
	Using the precedent idea for $x\in\mathbb{T}^{n}$ we obtain
	$\displaystyle{\sup_{|\beta'|=1}\sup_{x, y\in\mathbb{T}^{n}}\left\| x\right\|^{n+1}\left| \partial_{x}^{\beta'}K(x, y)\right|<\infty .}$\\
Note also that if the symbol of integral operators is order $m<-(n+1)$ then this operator is a locally weak (1, 1) [Seeger \cite{SSS}].
Since the kernel satisfy (\ref{a}) and $A$ is a locally weak $(1, 1),$  the Theorem \ref{sk} yields
\begin{align*}
\left\| M^{\#}\left( |Af|^{t}\right) \right\|^{\frac{1}{t}}_{L^{\frac{p(\cdot)}{t} }_{w^{t}}(\mathbb{T}^{n})}\lesssim\left\| M\left( |f|^{t}\right) \right\|^{\frac{1}{t}}_{L^{\frac{p(\cdot)}{t} }_{w^{t}}(\mathbb{T}^{n})}
=\left\| M\left( f\right) \right\|_{L^{p(\cdot)} _{w}(\mathbb{T}^{n})}.
\end{align*}  
Moreover, the maximal operator $M$ is bounded in $L_{w}^{p(\cdot)}(\mathbb{T}^{n})$  (Theorem \ref{mx}).  Thus, by the density of $C^{\infty}_{0}(\mathbb{T}^{n})$ in $L^{p(\cdot)}(\mathbb{T}^{n})$ (Lemma \ref{l4}), we have
\begin{align*}
\left\| M^{\#}\left( |Af|^{t}\right) \right\|^{\frac{1}{t}}_{L^{\frac{p(\cdot)}{t} }_{w^{t}}(\mathbb{T}^{n})}\lesssim&\left\| M\left( f\right) \right\|_{L^{p(x)} _{w}(\mathbb{T}^{n})}\\
\lesssim&\left\| f\right\| _{L^{p(\cdot)}_{w}(\mathbb{T}^{n})}.
\end{align*} 
It follows that    
\begin{align*}
\left\| Af\right\| _{L^{p(\cdot)}_{w}(\mathbb{T}^{n})}\lesssim\left\| f\right\| _{L^{p(\cdot)}_{w}(\mathbb{T}^{n})}.
\end{align*}
\end{proof}

\end{document}